\newcommand{\ta}{\ensuremath{\mathtt{a}}}
\definecolor{mypink1}{rgb}{0.858, 0.188, 0.478}
\definecolor{mypink2}{RGB}{219, 48, 122}
\definecolor{mypink3}{cmyk}{0, 0.7808, 0.4429, 0.1412}
\definecolor{mygray}{gray}{0.6}
\definecolor{venetianred}{rgb}{0.78, 0.03, 0.08}
\definecolor{sapphire}{rgb}{0.03, 0.15, 0.4}
\definecolor{utahcrimson}{rgb}{0.83, 0.0, 0.25}
\definecolor{trueblue}{rgb}{0.0, 0.45, 0.81}
\definecolor{carminered}{rgb}{1.0, 0.0, 0.22}
\definecolor{cobalt}{rgb}{0.0, 0.28, 0.67}
\definecolor{cornflowerblue}{rgb}{0.39, 0.58, 0.93}
\newtheorem{thm}{Theorem}[section]
\newtheorem{lem}{Lemma}[section]
\newtheorem{defi}{Definition}[section]
\newtheorem{rem}{Remark}[section]
\newcommand{\R}{\mathbb R}
\newcommand{\bw}{\mathbf{w}}
\newcommand{\bs}{{\mathbf S}}
\newcommand{\mb}{{\mathscr B}}
\definecolor{lime}{HTML}{A6CE39}
\DeclareRobustCommand{\orcidicon}{
	\begin{tikzpicture}
	\draw[lime, fill=lime] (0,0) 
	circle [radius=0.16] 
	node[white] {{\fontfamily{qag}\selectfont \tiny ID}};
	\draw[white, fill=white] (-0.0625,0.095) 
	circle [radius=0.007];
	\end{tikzpicture}
	\hspace{-2mm}
}
\numberwithin{equation}{section}
\title[Nonlocal semilinear parabolic equations]{On the Nonexistence of Global Solutions for Nonlocal  Parabolic Equations with Forcing Terms}
\author[ R. Ben Belgacem and M. Majdoub]{Rihab Ben Belgacem and Mohamed Majdoub\orcidB{}}
\address[R. Ben Belgacem]{Universit\'e de Tunis El Manar, Facult\'e des Sciences de Tunis, D\'epartement de math\'ematiques, Laboratoire \'equations aux d\'eriv\'ees partielles (LR03ES04), 2092 Tunis, Tunisie.}
\email{\sl {\textcolor{blue}{rihabbenbelgacem662@gmail.com}}}
\address[M. Majdoub]{Department of Mathematics, College of Science, Imam Abdulrahman Bin Faisal University, P. O. Box 1982, Dammam, Saudi Arabia.}
\address[M. Majdoub]{Basic and Applied Scientific Research Center, Imam Abdulrahman Bin Faisal University, P.O. Box 1982, 31441, Dammam, Saudi Arabia.}
\email{\sl {\textcolor{blue}{mmajdoub@iau.edu.sa}}}
\email{\sl {\textcolor{blue}{med.majdoub@gmail.com}}}
\subjclass[2020]{35K55, 35K15, 35C15, 35B44, 35A01, 35A02.}
\keywords{Nonlocal parabolic equation, forcing term, local/global existence, blow-up, Fujita exponent.}
\begin{document}
\allowdisplaybreaks
\begin{abstract}
The purpose of this work is to analyze the well-posedness and blow-up behavior of solutions to the nonlocal semilinear parabolic equation with a forcing term:
\[
\partial_t u - \Delta u = \|u(t)\|_{q}^\alpha |u|^p + t^{\varrho} \mathbf{w}(x) \quad \text{in} \quad \mathbb{R}^N \times (0, \infty),
\]

where \(N \geq 1\), \(p, q \geq 1\), \(\alpha \geq 0\), \(\varrho > -1\), and \(\mathbf{w}(x)\) is a suitably given continuous function. 

The novelty of this work, compared to previous studies, lies in considering a nonlocal nonlinearity \(\|u(t)\|_{q}^\alpha |u|^p\) and a forcing term \(t^{\varrho} \mathbf{w}(x)\) that depend on both time and space variables. This combination introduces new challenges in understanding the interplay between the nonlocal structure of the equation and the spatio-temporal forcing term.

Under appropriate assumptions, we establish the global existence of solutions for small initial data in Lebesgue spaces when the exponent \(p\) exceeds a critical value. In contrast, we show that the global existence cannot hold for \(p\) below this critical value, provided the additional condition \(\int_{\mathbb{R}^N} \mathbf{w}(x) \, dx > 0\) is satisfied. The main challenge in this analysis lies in managing the complex interaction between the nonlocal nonlinearity and the forcing term, which significantly influences the behavior of solutions.\end{abstract}
\date{\today}

	\maketitle

\tableofcontents


\newpage
\section{Introduction and main results}
\label{S1}

The study of nonlinear parabolic equations has been a central topic in mathematical analysis due to their wide-ranging applications in real-word problems. Among these, the nonlinear heat equation with nonlocal source and/or forcing terms has garnered significant attention because of its ability to model complex phenomena. In this paper, we investigate the Cauchy problem for a nonlinear heat equation that incorporates both an integral nonlinearity and a continuous forcing term. The equation under consideration is given by:
\begin{equation} 
\begin{cases}
\partial_t u - \Delta u = \|u(t)\|_q^\alpha|u|^p + 
\,t^{\varrho}\,\mathbf{w}(x), & \text{in } \mathbb{R}^N \times (0, \infty), \\
u(x, 0) = u_0(x), & \text{in } \mathbb{R}^N,
\end{cases}
\label{main}
\end{equation}
where $N \geq 1 $ , $p > 1$, $q \geq 1$, $\alpha \geq 0$, $\varrho>-1$, and $\mathbf{w}: \mathbb{R}^N \to  \mathbb{R}$ is a continuous function.
The problem \eqref{main} arises in the theory of quasi-regular and quasi-conformal mappings, which have significant applications in modeling non-Newtonian fluid flow, population dynamics in biological systems, and other physical phenomena (see, e.g., \cite{Furter, Pao}). For a comprehensive overview of physical models derived from various fields and their mathematical analysis, we refer to \cite{Furter, Henry, MP92, Pao, Rothe, Hu, GV, QS, MT1, MT2, QS, Salsa, SGKM} and the references therein.

In the case \(\mathbf{w}(x) = 0\) and \(\alpha = 0\), problem \eqref{main} reduces to the classical Fujita-type equation:
\begin{equation}
\label{Fuj}
\left\{
\begin{matrix}
\partial_t u - \Delta u = |u|^{p}, & \text{in } \mathbb{R}^N \times (0, \infty), \\
u(x, 0) = u_0(x), & x \in \mathbb{R}^N.
\end{matrix}
\right.
\end{equation}
In their seminal works \cite{fujita, Fuj69}, Fujita studied the initial value problem \eqref{Fuj} for nonnegative initial data \(u_0(x) \geq 0\). He established the following fundamental results:
\begin{itemize}
    \item If \(p < 1 + \frac{2}{N}\), then {all nontrivial solutions} of \eqref{Fuj} {blow up in finite time}.
    \item If \(p > 1 + \frac{2}{N}\), then \eqref{Fuj} admits both bounded global solutions and solutions that blow up in finite time. Specifically, for initial data \(u_0(x)\) bounded by a sufficiently small Gaussian \(\varepsilon e^{-|x|^2}\), the solution exists globally. For further details, see \cite[Theorem 20.1, p. 129]{QS}.
\end{itemize}
In the borderline case \(p = 1 + \frac{2}{N}\), it was shown by Hayakawa \cite{Hayak} for dimensions \(N = 1, 2\) and by Kobayashi, Sino, and Tanaka \cite{KST77} and Aronson and Weinberger \cite{AW78} for all \(N \geq 1\) that all nontrivial solutions to \eqref{Fuj} blow up in finite time. The critical exponent \(p_F := 1 + \frac{2}{N}\) is now widely known as the \textit{Fujita exponent} for problem \eqref{Fuj}.
The case of a weighted nonlinearity of the form \(|x|^b |u|^p\) was investigated in \cite{Qi} (see also \cite{BL89, LM89}). The Cauchy problem \eqref{Fuj} is shown to have no global solution if $p<1+\frac{2+b}{N}$, but global solutions exist when $p>1+\frac{2+b}{N}$ provided that $b >-1$ for $N=1$ and $b>-2$ for $N\geq 2$. See \cite[Theorem 1.1, p. 125]{Qi} for a precise statement. However, the critical case $p=1+\frac{2+b}{N}$ is still open.

Note that the case $\alpha=\varrho=0$ was investigated by Bandle, Levine and Zhang in \cite{BLZ}. They showed that \eqref{main} does not have global solutions provided that $p<\frac{N}{N-2}$ and $\int_{\R^N}\,{\mathbf w}(x)\,dx>0$. 
 As noted in \cite{BLZ}, the exponent $\frac{N}{N-2}$ is closely tied to the critical exponent of the corresponding elliptic problem. This exponent is particularly significant because it is analogous to determining the smallest value of $p$ for which the equation $\Delta v + v^p = 0$ admits singular radial solutions of the form $v(r) = A r^{-\frac{2}{p-1}}$.

In \cite{JKS}, the authors consider \eqref{main} with $\alpha=0$ and  $\varrho >-1$. They showed that the critical exponent is given by
\begin{equation*}
\begin{split}
 p^*(\varrho)=\; \left\{
\begin{array}{cllll}\frac{N-2\varrho}{N-2\varrho-2}
\quad&\mbox{if}&\quad -1<\varrho<0,\\ \infty \quad
&\mbox{if}&\quad \varrho>0.
\end{array}
\right.
\end{split}
\end{equation*}
In \cite{MM}, the author considers the equation
\[
\partial_t u - \Delta u = |x|^\alpha |u|^p + \mathtt{a}(t) \mathbf{w}(x),
\]
where the function \(\mathtt{a}(t)\) exhibits the following asymptotic behavior:
\begin{equation*}
\mathtt{a}(t) \sim 
\begin{cases}
\ta_0\,t^\sigma & \text{as } t \to 0, \\
\ta_\infty\,t^m & \text{as } t \to \infty,
\end{cases}
\end{equation*}
with \(\ta_0, \ta_\infty > 0\), \(\sigma > -1\), and \(m \in \mathbb{R}\). 

It was shown that the critical exponent for this problem is given by
\begin{equation*}
p^*(m, \alpha) = 
\begin{cases}
\frac{N - 2m + \alpha}{N - 2m - 2} & \text{if } m \leq 0 \text{ and } \alpha > -2, \\
\infty & \text{if } m > 0 \text{ and } \alpha > -2,
\end{cases}
\end{equation*}
provided that \(\int_{\mathbb{R}^N} \mathbf{w}(x) \, dx > 0\).
Later, in \cite{Opuscula}, a broad class of forcing terms of the form \(\mathtt{a}(t) \mathbf{w}(x)\) is studied.
To the best of our knowledge, the first rigorous investigation of the critical Fujita exponent for diffusion equations with nonlocal nonlinearity was conducted by Galaktionov and Levine in \cite{GL1998}. They studied a class of equations of the form
\begin{equation}
    \label{GL}
    \partial_t u - \Delta u = \left(\int_{\mathbb{R}^N} K(y) u^q(y, t) \, dy \right)^{(p-1)/q} u^{r+1}, \quad x \in \mathbb{R}^N, \quad t > 0,
\end{equation}
where \(p \geq 1\), \(r \geq 0\), \(p + r > 1\), and \(1 \leq q < \infty\). The function \(K\) is assumed to be nonnegative and measurable, with additional technical assumptions. Using a subsolution approach, the authors determined the critical Fujita exponent under suitable conditions. Specifically, for the case \(K(y) = 1\), any nontrivial nonnegative solution blows up in finite time provided that 
\[
0 \leq r \leq \frac{2}{N} \quad \text{and} \quad p < 1 + \frac{2q(1 - Nr/2)}{N(q - 1)}.
\]
Later, the following nonlocal equation
\begin{equation}  
   \partial_t u-\Delta u=u^p\left(\displaystyle\int_{\R^N} u^q(y,t)dy\right)^{r}, \quad x \in \mathbb{R}^N, \quad t > 0, 
\label{Lap05}
\end{equation}
was considered in \cite{Lap2005}. Here, $p> 1$ and $q,r>0$. Among other results, it is established that if $p+(q-1)r \leqslant 1+\frac{2}{N}$, then all nontrivial nonnegative solutions blow-up in finite time. A similar problem was also studied in \cite{Lap2002}, namely

\begin{equation}  
   \partial_t u - \Delta u = u \left(\displaystyle\int_{\mathbb{R}^N} k(y) u^q(y, t) \, dy \right)^{p/q}, \quad x \in \mathbb{R}^N, \quad t > 0,
\label{Lap02}
\end{equation}
where \(p > 0\) and \(q \geq 1\). The kernel \(k(y)\) is assumed to be measurable, nonnegative, and satisfying the estimates
\begin{equation}
    \label{Kernel}
    (1 + |y|)^{\gamma} \lesssim k(y) \lesssim (1 + |y|)^{\gamma}, \quad \gamma \in \mathbb{R}.
\end{equation}

It was shown that every nontrivial nonnegative solution blows up in finite time if one of the following conditions holds:
\begin{itemize}
    \item \(0 \leq \gamma < N(q - 1)\) and \(0 < p \leq \frac{2q}{N(q - 1) - \gamma}\);
    \item \(N(q - 1) \leq \gamma\) and \(p > 0\).
\end{itemize}
It is tempting to conjecture that the {\it Fujita critical exponent} can be deduced from scaling considerations. Specifically, consider a nonlinear parabolic equation on $\mathbb{R}^N$ with the property that if $u(x,t)$ is a solution with initial data $u_0(x)$, then the rescaled function $\lambda^a u(\lambda x, \lambda^2 t)$, for $\lambda > 0$, is also a solution with initial data $\lambda^a u_0(\lambda x)$. This scaling invariance implies that the only Lebesgue space that preserves the norm of the rescaled initial data $\lambda^a u_0(\lambda x)$ is $L^{p_c}(\mathbb{R}^N)$, where $p_c = N/a$. Consequently, the Fujita critical exponent $p_F$ is expected to be determined by the condition $p_c = 1$, which corresponds to $a = N$. This scaling argument accurately predicts the value of $p_F$ for a broad class of nonlinear local diffusion equations \cite{Bandle89, BL89, Gal83, Gal80, Mei86, Mei88, Mei90}, as well as for certain nonlinear nonlocal parabolic problems. For example, it applies to equations of the form
$$
u_t = \Delta u + u(0,t)^{p-q} u(x,t)^q \quad \text{with} \quad p > q \geq 1,
$$
as investigated in \cite{Soup99}. However, there exist nonlinear nonlocal equations for which the Fujita exponent $p_F = \infty$ cannot be predicted by scaling arguments; see \cite{Soup98, Soup99}. A particularly interesting example is studied in \cite{CDW}, where the Fujita exponent is finite, but its value deviates from that expected based on scaling invariance. 

When applied to equation \begin{equation}
    \label{main-0}
    \partial_t u - \Delta u = \|u(t)\|_q^\alpha |u|^p,
\end{equation}
the scaling argument yields the Fujita exponent
\begin{equation}
    \label{Fuj-0}
    p + \left(1 - \frac{1}{q}\right)\alpha = 1 + \frac{2}{N}.
\end{equation}

The novelty in studying \eqref{main} lies in the interplay between the nonlocal nonlinearity $\|u(t)\|_q^\alpha |u|^p$ and the forcing term $t^{\varrho} \mathbf{w}(x)$. To the best of our knowledge, this is the first time such a problem has been considered.

It is worth noting that several well-established methods exist for studying the blow-up phenomenon, each with its specific domain of applicability to problems in mathematical physics. For a comprehensive survey of blow-up results for solutions of first-order nonlinear evolution inequalities and related Cauchy problems, we refer to \cite{BLZ, DL, GV} and the references therein. Further detailed discussions can be found in the books \cite{Hu, Pao, SGKM, QS}.

We adopt the following notion of global weak solution.
\begin{defi}
    \label{defn:weak-solution}
We say that $u$ is a global weak solution of \eqref{main} if it satisfies the conditions
\begin{equation}\label{eq:weak-solution}
\nonumber u_0\in L^1(\R^N),\quad u\in L^\infty_{loc}((0,\infty), L^q),\quad \|u(t)\|_q^{\alpha}|u|^{p}\in L^1_{loc}(\R^N\times (0,\infty))\end{equation} and
\begin{align*}
\int_{\R^N\times (0,\infty)}u(-\partial_{t}\psi-\Delta\psi)dx dt=\int_{\R^N}u_0(x)\psi(x,0)dx+\int_{\R^N\times (0,\infty)}\|u(t)\|_q^{\alpha}|u|^p\psi dxdt+\\
\qquad{}\hspace{3cm}\int_{\R^N\times (0,\infty)}t^{\varrho}\mathbf{w}\psi\,dxdt
\end{align*}
for all $\psi\in C^{\infty}_{0}(\R^N\times [0,\infty))$.
\end{defi}
As is standard practice, \eqref{main} is equivalent, within an appropriate framework, to the Duhamel formulation
\begin{equation}\label{Duhamel}
u(t)=\bs(t)u_0 +\int_0^t \bs(t-\tau)\|u(\tau)\|_{q}^\alpha |u(\tau)|^p  \, d\tau+\int_0^t  \tau^{\varrho} \bs(t-\tau) \bw\,d\tau
\end{equation}
where $\bs(t)=:{\rm e}^{t\Delta}$ denotes the linear heat semi-group. A solution of the integral equation \eqref{Duhamel} is commonly referred to as a {\it mild solution} of \eqref{main}. In our setting, it can be shown that a mild solution also constitutes a weak solution. For a detailed discussion on the equivalence between the differential and integral formulations for the nonlinear heat equation, we refer to \cite{Terr1, Terr2, RT, PGL}.
\subsection{Main results}
In what follows, we will make use of the following assumptions on $\bw$, which play a key role in applying the comparison principle and deriving blow-up results. Specifically, we assume that $\bw\in C(\R^N)\cap L^1(\R^N)$ satisfies either 
\begin{equation}
    \label{Ass-blow-w}
   \inf_{\lambda>0, x\in\R^N} \left(\int_{\R^N}\, {\rm e}^{-\frac{|x-y|^2}{\lambda}}\,\bw(y)dy\right)\geq 0,
\end{equation}
or
\begin{equation}
    \label{Ass-blow-ww}
    \int_{\R^N}\, \bw(x)dx> 0.
\end{equation}
The latter assumption is frequently used in the literature; see, for instance, \cite{Opuscula, BLZ, JKS, MM, Majd}.

Our first main result addresses the local well-posedness in Lebesgue spaces.
\begin{thm}
\label{LWP}
Let $N\geq 1$, $p>1$, $q\geq 1$, $\alpha\geq 1$, and $\varrho>-1$. Furthermore, suppose that $q>\frac{N(p-1)}{2}$.  Then the initial value problem \eqref{main} is locally well-posed in $L^q$. More precisely, given $u_0, \bw \in L^q(\mathbb{R}^N)$, then there exist $T>0$ and a unique mild solution $u\in C([0,T], L^q)$ to \eqref{main}.  
\end{thm}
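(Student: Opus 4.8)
The plan is to solve the Duhamel formulation \eqref{Duhamel} by a Banach fixed-point argument. Writing the right-hand side of \eqref{Duhamel} as $\Phi(u)(t)=\bs(t)u_0+\mathcal{N}(u)(t)+\mathcal{F}(t)$, where $\mathcal{N}(u)(t)=\int_0^t\bs(t-\tau)\|u(\tau)\|_q^\alpha|u(\tau)|^p\,d\tau$ is the nonlocal nonlinear term and $\mathcal{F}(t)=\int_0^t\tau^\varrho\bs(t-\tau)\bw\,d\tau$ is the forcing contribution, I would show that for $T$ small enough $\Phi$ maps a closed ball $B_R=\{u\in C([0,T],L^q):\sup_{0\le t\le T}\|u(t)\|_q\le R\}$ into itself and is a strict contraction there. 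The only structural tool needed is the family of $L^s$--$L^q$ smoothing estimates for the heat semigroup, $\|\bs(t)f\|_q\le C\,t^{-\frac N2(\frac1s-\frac1q)}\|f\|_s$ for $1\le s\le q$, together with the contraction property $\|\bs(t)f\|_q\le\|f\|_q$.

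The three terms are estimated separately. The linear term is immediately controlled by $\|u_0\|_q$ and is strongly continuous in $t$. For the forcing term, pulling the $L^q$ norm inside the integral and using $\|\bs(t-\tau)\bw\|_q\le\|\bw\|_q$ gives $\|\mathcal{F}(t)\|_q\le\|\bw\|_q\int_0^t\tau^\varrho\,d\tau=\frac{\|\bw\|_q}{\varrho+1}t^{\varrho+1}$, which is finite precisely because $\varrho>-1$ and tends to $0$ as $t\to0$. The decisive estimate is the nonlinear one. Since $\|\,|u|^p\,\|_{q/p}=\|u\|_q^p$, the integrand belongs to $L^{q/p}$ with $\bigl\|\,\|u(\tau)\|_q^\alpha|u(\tau)|^p\bigr\|_{q/p}=\|u(\tau)\|_q^{\alpha+p}$; applying the smoothing estimate with $s=q/p$ produces the singular factor $(t-\tau)^{-\frac{N(p-1)}{2q}}$, and hence
\[
\|\mathcal{N}(u)(t)\|_q\le C\int_0^t(t-\tau)^{-\frac{N(p-1)}{2q}}\|u(\tau)\|_q^{\alpha+p}\,d\tau\le C\,t^{1-\frac{N(p-1)}{2q}}\sup_{0\le\tau\le t}\|u(\tau)\|_q^{\alpha+p}.
\]
The time integral converges if and only if $\frac{N(p-1)}{2q}<1$, which is exactly the hypothesis $q>\frac{N(p-1)}{2}$; this is the role played by that assumption. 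Choosing $R$ comparable to $\|u_0\|_q+1$ and $T$ so small that $C\,T^{1-\frac{N(p-1)}{2q}}R^{\alpha+p}+\frac{\|\bw\|_q}{\varrho+1}T^{\varrho+1}\le R/2$ makes $\Phi$ a self-map of $B_R$.

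For the contraction estimate, only $\mathcal{N}$ depends on $u$, so I would bound $\bigl\|\,\|u\|_q^\alpha|u|^p-\|v\|_q^\alpha|v|^p\,\bigr\|_{q/p}$ after splitting the difference as $\|u\|_q^\alpha(|u|^p-|v|^p)+(\|u\|_q^\alpha-\|v\|_q^\alpha)|v|^p$. The elementary inequality $|\,|u|^p-|v|^p|\le p(|u|^{p-1}+|v|^{p-1})|u-v|$ and H\"older with exponents $\frac{q}{p-1}$ and $q$ control the first part by $C(\|u\|_q^{p-1}+\|v\|_q^{p-1})\|u-v\|_q$; for the second part the reverse triangle inequality together with the bound $|s^\alpha-\sigma^\alpha|\le\alpha(s^{\alpha-1}+\sigma^{\alpha-1})|s-\sigma|$ — valid for $\alpha\ge1$, which is exactly why the hypothesis is strengthened from $\alpha\ge0$ to $\alpha\ge1$ — gives a bound of the form $C\,\alpha(\|u\|_q^{\alpha-1}+\|v\|_q^{\alpha-1})\|v\|_q^{p}\|u-v\|_q$. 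On $B_R$ both contributions are $\le C(R)\|u-v\|_q$, so $\sup_t\|\mathcal N(u)(t)-\mathcal N(v)(t)\|_q\le C(R)\,T^{1-\frac{N(p-1)}{2q}}\sup_t\|u(t)-v(t)\|_q$, which is a contraction for $T$ small. Banach's fixed-point theorem then yields a unique fixed point in $B_R$; strong continuity of $\bs(\cdot)$ and dominated convergence give $u\in C([0,T],L^q)$, and a standard connectedness argument upgrades uniqueness from the ball to all of $C([0,T],L^q)$.

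The step I expect to be the main obstacle is the nonlinear Lipschitz estimate, and more precisely the choice of functional framework that makes the intermediate exponent admissible: the computation above uses $s=q/p$, which is a legitimate Lebesgue exponent only when $q\ge p$. In the remaining range $\frac{N(p-1)}{2}<q<p$ one cannot close the estimate in $C([0,T],L^q)$ alone, and I would instead work in a weighted space $X_T=\{u:\sup_{0<t\le T}t^{\gamma}\|u(t)\|_r<\infty\}\cap C([0,T],L^q)$ for a suitable $r>q$ and scaling weight $\gamma$, controlling the $L^{pr}$-type quantities through the auxiliary norm in the spirit of Weissler and Brezis--Cazenave. Reconciling the nonlocal scalar factor $\|u(\tau)\|_q^\alpha$, which is naturally measured in $L^q$, with the weighted $L^r$ control of the local factor $|u|^p$, while keeping continuity down to $t=0$, is the delicate bookkeeping at the heart of the argument.
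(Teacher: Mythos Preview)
Your strategy is correct, and your final paragraph anticipates precisely what the paper does. The only discrepancy is one of organization: the paper does \emph{not} first attempt the pure $C([0,T],L^q)$ argument and then fall back on a weighted space for $q<p$; it works from the outset in the two-norm space
\[
\mathbf{X}(T)=\Bigl\{u:\ \sup_{0<t<T}\|u(t)\|_q\le M\ \text{and}\ \sup_{0<t<T}t^{\sigma}\|u(t)\|_{pq}\le M\Bigr\},\qquad \sigma=\frac{N(p-1)}{2pq},
\]
which is exactly your anticipated $X_T$ with the concrete choice $r=pq$ and $\gamma=\sigma$. This choice makes the bookkeeping you flagged essentially trivial: the nonlocal scalar factor $\|u(\tau)\|_q^\alpha$ is controlled by $M^\alpha$ directly from the first norm, while $\||u(\tau)|^p\|_q=\|u(\tau)\|_{pq}^p\le M^p\tau^{-p\sigma}$ comes from the second, and the resulting time integral $\int_0^t(t-\tau)^{-\sigma}\tau^{-p\sigma}\,d\tau$ converges because $p\sigma=\frac{N(p-1)}{2q}<1$, which is again the hypothesis $q>\frac{N(p-1)}{2}$. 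No case distinction on $q\gtrless p$ is needed, and the smoothing estimate is only ever applied with the legitimate pair $(q,pq)$. Your $q\ge p$ argument is a valid shortcut for that sub-range, but it does not cover the full statement; the paper's uniform approach does.
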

\begin{rem}
    ~
{\rm 
    \begin{itemize}
    \item[a)] The restriction $\alpha \geq 1$ is required for the application of a fixed-point argument based on the inequality \eqref{key-contrac} below. While the case $\alpha = 0$ is straightforward, the case $\alpha \in (0,1)$ remains unresolved.
    \item[b)] The assumption $\varrho > -1$ is necessary to ensure that $t^\varrho \in L^1_{\text{loc}}(0,\infty)$.
\end{itemize} }
\end{rem}
When $q\geq p$, it is meaningful to consider the weak solution $u\in C([0,T], L^q(\R^N))$ in the integral sense as defined in \eqref{Duhamel}. Uniqueness of the solution is guaranteed within this class, as established in the following result.
\begin{thm}
\label{Uniq}
Let $ N \geq 1 $, $ p > 1 $, $ q \geq 1 $, $ \alpha \geq 1 $, and $ \varrho > -1 $. Assume the following conditions hold:
\begin{equation}
    \label{Cond-Uniq}
    q > N(p - 1)/2,\;\;q \geq p,\;\;\mathbf{w} \in L^q.
\end{equation}
Then, for any $ T > 0 $, the initial value problem \eqref{main} admits at most one solution in the space $ C([0, T], L^q) $.
\end{thm}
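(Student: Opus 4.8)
The plan is to argue by a singular Grönwall estimate on the difference of two solutions. Suppose $u, v \in C([0,T], L^q)$ both solve the Duhamel equation \eqref{Duhamel} with the same data $u_0$ and the same forcing. Writing $w := u - v$ and $F(\phi) := \|\phi\|_q^\alpha |\phi|^p$, the linear contribution $\bs(t)u_0$ and the forcing integral cancel, leaving
\begin{equation}
w(t) = \int_0^t \bs(t-\tau)\bigl[F(u(\tau)) - F(v(\tau))\bigr]\, d\tau.
\end{equation}
Since $u, v$ are continuous into $L^q$ on the compact interval $[0,T]$, the quantity $M := \sup_{0 \leq t \leq T}\bigl(\|u(t)\|_q + \|v(t)\|_q\bigr)$ is finite; all constants below are allowed to depend on $M$, $p$, $\alpha$, $N$, and $q$.

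First I would estimate the nonlinear difference in $L^{q/p}$, which is a legitimate Lebesgue space precisely because $q \geq p$. Splitting
\begin{equation}
F(u) - F(v) = \|u\|_q^\alpha\bigl(|u|^p - |v|^p\bigr) + \bigl(\|u\|_q^\alpha - \|v\|_q^\alpha\bigr)|v|^p,
\end{equation}
I would control the first summand with the elementary bound $\bigl||u|^p - |v|^p\bigr| \leq p\bigl(|u|^{p-1} + |v|^{p-1}\bigr)|w|$ followed by H\"older's inequality with conjugate exponents $\tfrac{q}{p-1}$ and $q$, which gives $\||u|^p - |v|^p\|_{q/p} \lesssim \|w\|_q$. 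For the second summand I would use that $s \mapsto s^\alpha$ is Lipschitz on $[0,M]$ with constant $\alpha M^{\alpha-1}$ --- this is exactly where $\alpha \geq 1$ is needed --- together with the reverse triangle inequality $\bigl|\|u\|_q - \|v\|_q\bigr| \leq \|w\|_q$ and the identity $\||v|^p\|_{q/p} = \|v\|_q^p$. Collecting terms yields the key Lipschitz-type estimate
\begin{equation}
\bigl\|F(u(\tau)) - F(v(\tau))\bigr\|_{q/p} \leq C\,\|w(\tau)\|_q.
\end{equation}

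Next I would feed this into the smoothing inequality for the heat semigroup, $\|\bs(t)f\|_q \lesssim t^{-\frac{N}{2}\left(\frac{1}{r} - \frac{1}{q}\right)}\|f\|_r$, taken with $r = q/p \leq q$, for which the time exponent equals $\frac{N(p-1)}{2q}$. This produces the singular integral inequality
\begin{equation}
\|w(t)\|_q \leq C\int_0^t (t-\tau)^{-\frac{N(p-1)}{2q}}\,\|w(\tau)\|_q\, d\tau, \qquad 0 \leq t \leq T.
\end{equation}
The hypothesis $q > N(p-1)/2$ guarantees $\frac{N(p-1)}{2q} < 1$, so the kernel is locally integrable and a singular Gr\"onwall lemma (see, e.g., Henry \cite{Henry}) forces $w \equiv 0$ on $[0,T]$, which is the claimed uniqueness.

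The step I expect to require the most care is the passage from the singular integral inequality to $w \equiv 0$: unlike the classical Gr\"onwall lemma, the kernel $(t-\tau)^{-\frac{N(p-1)}{2q}}$ is unbounded, so one must either iterate the inequality (each iteration raising the effective exponent until the kernel becomes bounded and ordinary Gr\"onwall applies) or first establish uniqueness on a sufficiently short interval $[0, T_0]$ and then bootstrap across $[0,T]$ by a continuation argument. The nonlocal factor $\|u\|_q^\alpha$ causes no additional difficulty here, since it contributes only the scalar Lipschitz estimate above, and the smoothing estimate already closes within the single space $L^q$.
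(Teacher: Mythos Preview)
Your proof is correct and follows essentially the same route as the paper's: subtract two solutions, use a pointwise Lipschitz-type bound on the nonlinearity (the paper packages this as Lemma~\ref{Contrac-estim}, you spell it out directly), apply the $L^{q/p}\to L^q$ smoothing of the heat semigroup to obtain a singular integral inequality with kernel exponent $\tfrac{N(p-1)}{2q}<1$, and conclude by a singular Gr\"onwall lemma (the paper invokes Lemma~\ref{SGI} from \cite{DM1986}, you cite \cite{Henry}). Your discussion of why $q\geq p$ and $\alpha\geq 1$ are needed is in fact more explicit than the paper's, and your caution about the singular Gr\"onwall step is unnecessary---the result you need is standard and available as stated in Lemma~\ref{SGI}.
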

\begin{rem}
    {\rm Note that the case $\alpha = 0$ was studied in \cite[Theorem 4]{BC} for both $q > N(p - 1)/2$ (resp. $q = N(p - 1)/2$) and $q \geq p$ (resp. $q > p$). In this work, however, we are only able to address the case where $q > N(p - 1)/2$ and $q \geq p$.}
\end{rem}
{ In what follows we set
\begin{equation}
    \label{delt}
    \delta=\alpha\left(1-\frac{1}{q}\right).
\end{equation}
\begin{thm}
\label{Fujita-2-bis}
Let $ N \geq 3 $, $ p > 1 $, $ q \geq 1 $, $ \alpha \geq 0 $, and $ \varrho \leq 0 $. Suppose $ \delta < \frac{2}{N} $. Assume further that $ \mathbf{w} \in C(\mathbb{R}^N) \cap L^1(\mathbb{R}^N) $ satisfies the conditions \eqref{Ass-blow-w} and \eqref{Ass-blow-ww}. If the inequality
\begin{equation}
    \label{Fujita-exp}
    p + \frac{N\delta}{N - 2\varrho - 2} < \frac{N - 2\varrho}{N - 2\varrho - 2}
\end{equation}
holds, then the problem \eqref{main} admits no global weak solution with nonnegative initial data. 
\end{thm}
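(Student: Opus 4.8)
The plan is to argue by contradiction with the rescaled test-function (nonlinear capacity) method, the new ingredient being a lower bound on the nonlocal factor $\|u(t)\|_q$ that is manufactured by the source term. Suppose $u$ is a global weak solution with $u_0\ge 0$. The two hypotheses on $\bw$ play complementary roles. Assumption \eqref{Ass-blow-w} says precisely that $\bs(t)\bw\ge 0$ for every $t>0$; inserting this into the representation \eqref{Duhamel} shows first that $u\ge 0$ and, more precisely, that
\[
u(x,t)\ \ge\ F(x,t):=\int_0^t \tau^{\varrho}\,\bs(t-\tau)\bw\,d\tau\ \ge\ 0 .
\]
Assumption \eqref{Ass-blow-ww}, namely $\int_{\R^N}\bw\,dx=:M>0$, is what produces growth: it yields a Gaussian lower bound $\bs(s)\bw(x)\gtrsim s^{-N/2}e^{-|x|^2/(cs)}$ for $s$ large and $|x|\lesssim\sqrt s$, and it will also render the forcing integral against a test function strictly positive.

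The first genuine step is to turn this into a quantitative lower bound for $\|u(t)\|_q$. Feeding the Gaussian lower bound into $F$ and integrating over the window $\tau\in[t/2,3t/4]$ gives $F(x,t)\gtrsim t^{\varrho+1-N/2}$ on the ball $|x|\le \sqrt t$, whence, taking the $L^q$ norm over that ball,
\[
\|u(t)\|_q\ \ge\ \|F(t)\|_q\ \gtrsim\ t^{\,\varrho+1-\frac N2\left(1-\frac1q\right)}\qquad (t\ge t_0).
\]
Raising to the power $-\alpha/(p-1)$ produces a bound $\|u(t)\|_q^{-\alpha/(p-1)}\lesssim t^{-\kappa}$ on $[t_0,\infty)$, in whose exponent $\kappa$ the nonlocal quantity $\delta=\alpha(1-1/q)$ appears; this is exactly where $\delta$ will enter the final balance.

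Now I test the weak formulation of Definition~\ref{defn:weak-solution} against $\psi(x,t)=\varphi(x/R)^{\ell}\,\theta(t/T)^{\ell}$, with $\varphi,\theta$ standard cutoffs, $\theta$ supported in $[1,2]$ so that $\psi$ is carried by $t\in[T,2T]\subset[t_0,\infty)$ (which kills the initial-data term) and $\ell$ chosen large. Writing $|u|\,|\partial_t\psi+\Delta\psi|$ as a product and applying Young's inequality with exponents $p,p'$ absorbs the nonlinear term into the left-hand side, leaving
\[
\int_{\R^N\times(0,\infty)} t^{\varrho}\bw\,\psi\,dx\,dt\ \lesssim\ \int_{\R^N\times(0,\infty)} \|u(t)\|_q^{-\frac{\alpha}{p-1}}\,\psi^{-\frac{1}{p-1}}\,\bigl|\partial_t\psi+\Delta\psi\bigr|^{p'}\,dx\,dt .
\]
Since $\int_{\R^N}\bw\,\varphi(\cdot/R)^{\ell}dx\to M>0$, the left side is $\gtrsim T^{\varrho+1}$. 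On the right, the parabolic scaling $T=R^2$ gives $|\partial_t\psi+\Delta\psi|\lesssim T^{-1}$ on the support, whose volume is $\sim T^{1+N/2}$; inserting the bound $\|u(t)\|_q^{-\alpha/(p-1)}\lesssim t^{-\kappa}$ controls the right side by $C\,T^{E}$ for an explicit exponent $E=E(N,p,\varrho,\delta)$. The subcriticality condition \eqref{Fujita-exp} guarantees $\varrho+1>E$, so letting $T\to\infty$ forces $T^{\varrho+1}\lesssim T^{E}$ with $\varrho+1>E$, which is impossible. Hence no global weak solution can exist.

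The heart of the matter, and the main obstacle, is the nonlocal factor $\|u(t)\|_q^{\alpha}$. For a purely local nonlinearity Young's inequality would leave only cutoff derivatives on the right; here it instead leaves the negative power $\|u(t)\|_q^{-\alpha/(p-1)}$, so the whole scheme collapses unless $\|u(t)\|_q$ is controlled from below. Supplying that control rigorously is the delicate part: it needs the Gaussian lower estimate for $\bs(s)\bw$ (this is where $\int_{\R^N}\bw>0$ is indispensable) together with the assertion that a fixed proportion of the accumulating mass remains inside $B_{c\sqrt t}$, followed by a careful bookkeeping of exponents to verify that \eqref{Fujita-exp} yields the strict inequality $E<\varrho+1$. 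The restrictions $N\ge 3$, $\delta<2/N$ and $\varrho\le 0$ enter precisely here: they keep $N-2\varrho-2>0$ and $2-N\delta>0$, so that the critical value on the right of \eqref{Fujita-exp} is finite and admissible. The remaining cutoff estimates are routine once $\ell$ is taken large enough that $\psi^{-1/(p-1)}|\partial_t\psi+\Delta\psi|^{p'}$ is integrable.
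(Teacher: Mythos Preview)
Your approach is correct and shares the overall architecture of the paper's proof: test against a rescaled cutoff supported at large times, absorb the nonlinear term via Young's inequality, and compare the resulting power of $T$ against the forcing integral $\gtrsim T^{\varrho+1}$. The one substantive difference is the source of the lower bound on $\|u(t)\|_q$. The paper derives it from the linear evolution of the \emph{initial data}: since $u(x,t)\ge \bs(t)u_0(x)$ (by \eqref{Duhamel} together with $\bs(t)\bw\ge 0$), Lemma~\ref{R-Maximum} gives $u(x,t)\ge \mathtt{C}_0\,t^{-N/2}e^{-|x|^2/t}$ for $t\ge 1$ with $\mathtt{C}_0=(4\pi)^{-N/2}\int e^{-|y|^2/2}u_0(y)\,dy$, hence $\|u(t)\|_q\gtrsim t^{-\frac{N}{2}(1-1/q)}$. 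You instead extract the bound from the accumulated \emph{forcing} $F(x,t)$, obtaining the sharper estimate $\|u(t)\|_q\gtrsim t^{\varrho+1-\frac{N}{2}(1-1/q)}$. Your route has the advantage of remaining valid when $u_0\equiv 0$ (where the paper's constant $\mathtt{C}_0$ vanishes) and in fact yields a blow-up range strictly wider than \eqref{Fujita-exp}; the price is that the quantitative lower bound $\bs(s)\bw(x)\gtrsim s^{-N/2}$ on $\{|x|\le c\sqrt s\}$ must be carefully established from $\int_{\R^N}\bw>0$ and $\bw\in L^1$, which you rightly flag as the delicate step, whereas the paper's bound follows immediately from the explicit heat kernel acting on $u_0\ge 0$.
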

}
\begin{rem}
    {\rm When $\alpha = 0$, condition \eqref{Fujita-exp} reduces to $p < \frac{N - 2\varrho}{N - 2\varrho - 2}$, a result that was previously established in \cite{MM}.}
\end{rem}
We now turn to the analysis of the global theory. The fundamental result in this context is as follows:
\begin{thm}
\label{GEP}
Let $N\geq2$, $\delta<\frac{2}{N}$ and $-1<\varrho<0$. Assume that $p\geq\frac{N - 2\varrho-N\delta}{N - 2\varrho - 2}$, and, $ \frac{N(p-1)}{2} \leqslant q \leqslant p$. Let us define $\ell = \frac{N \left( (p-1)(q-1) + \delta \right)}{2(q-1) + N\delta + 2 (\varrho + 1) (p-1)(q-1) + 2 (\varrho + 1) \delta}$ and $p_c=\frac{N\left((p - 1)(q - 1) + \delta q\right)}{2(q - 1) + N\delta}$. Then, for any initial data  $u_0\in L^{p_c}(\mathbb{R}^N)$ and $\bw \in L^\ell(\mathbb{R}^N)$, with the property that $\|u_0\|_{L^{p_c}} + \|\bw\|_{L^\ell}$ is sufficiently small, the equation \eqref{main} admits a global solution $u$ in time.  
\end{thm}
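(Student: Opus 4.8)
The plan is to solve the Duhamel equation \eqref{Duhamel} by Banach's fixed point theorem in a scaling-adapted weighted space, following the Kato--Weissler--Brezis--Cazenave scheme for the local critical Fujita problem while carrying the nonlocal factor $\|u(t)\|_q^\alpha$ and the forcing term through every estimate. Writing $\theta(s)=\frac{N}{2}\big(\frac{1}{p_c}-\frac{1}{s}\big)$ and fixing an auxiliary exponent $r$ in the window $\max\big\{p,\tfrac{N(p-1)}{2}\big\}<r<p\,p_c$, I would work on a small closed ball of
\[
X=\Big\{u:\ \|u\|_X:=\sup_{t>0}\|u(t)\|_{L^{p_c}}+\sup_{t>0}t^{\theta(r)}\|u(t)\|_{L^{r}}<\infty\Big\},
\]
and apply $\Phi(u)(t)=\bs(t)u_0+\mathcal N(u)(t)+\mathcal F(t)$, where $\mathcal N(u)(t)=\int_0^t\bs(t-\tau)\|u(\tau)\|_q^\alpha|u(\tau)|^p\,d\tau$ and $\mathcal F(t)=\int_0^t\tau^\varrho\bs(t-\tau)\bw\,d\tau$. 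The two structural constraints on $q$ are used here: $q\ge\frac{N(p-1)}{2}$ is equivalent to $p_c\le q$, so that $\|u(t)\|_q$ can be recovered by interpolating the two norms of $X$, namely $\|u(t)\|_q\le C\,t^{-\theta(q)}\|u\|_X$; and $q\le p$ makes the lower endpoint $r\ge p$ compatible with $q\le r$, which is what allows $|u|^p$ to be placed in $L^{r/p}$.

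For the linear part, the smoothing estimate $\|\bs(t)f\|_{L^{s}}\le C\,t^{-\theta(s)}\|f\|_{L^{p_c}}$ gives $\|\bs(\cdot)u_0\|_X\le C\|u_0\|_{L^{p_c}}$ at once. For the forcing part I would combine $L^{\ell}$--$L^{s}$ smoothing with the Beta identity $\int_0^t\tau^\varrho(t-\tau)^{-A}\,d\tau=B(1+\varrho,1-A)\,t^{1+\varrho-A}$, finite because $\varrho>-1$ and $A<1$; the exponent $\ell$ is chosen exactly so that the resulting powers of $t$ are $t^{0}$ in $L^{p_c}$ and $t^{-\theta(r)}$ in $L^{r}$, giving $\|\mathcal F\|_X\le C\|\bw\|_{L^{\ell}}$. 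For the nonlinear part I would bound $\|u(\tau)\|_q^\alpha\le C\,\tau^{-\alpha\theta(q)}\|u\|_X^{\alpha}$ and $\big\||u(\tau)|^p\big\|_{L^{r/p}}=\|u(\tau)\|_{L^{r}}^{p}\le C\,\tau^{-p\theta(r)}\|u\|_X^{p}$, apply the smoothing from $L^{r/p}$ to $L^{r}$ (with exponent $A=\frac{N(p-1)}{2r}$), and use the crucial algebraic identity
\[
1-A-\big(\alpha\theta(q)+p\theta(r)\big)=-\theta(r),
\]
which is precisely the defining relation of $p_c$ and is independent of the auxiliary exponent $r$. This turns the time-integral into a constant multiple of $\|u\|_X^{p+\alpha}$ and yields $\|\mathcal N(u)\|_X\le C\|u\|_X^{p+\alpha}$.

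Since $p+\alpha>1$, taking $\|u_0\|_{L^{p_c}}+\|\bw\|_{L^{\ell}}$ small and the ball radius small makes $\Phi$ a self-map; the contraction estimate follows from the same bounds together with $\big||u|^p-|v|^p\big|\le C(|u|^{p-1}+|v|^{p-1})|u-v|$ and the mean-value bound $\big|\,\|u\|_q^\alpha-\|v\|_q^\alpha\big|\le C\,(\|u\|_q+\|v\|_q)^{\alpha-1}\|u-v\|_q$ on the ball. Banach's theorem then produces a unique global mild solution $u\in X$, and approximating $u_0$ in $L^{p_c}$ and letting $t\to0$ in the Duhamel terms upgrades it to $u\in C([0,\infty),L^{p_c})$.

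The heart of the matter, and the genuinely new difficulty relative to the local forced problem of \cite{MM,JKS}, is the simultaneous compatibility of two different scalings inside one weighted norm: the homogeneous/nonlinear scaling encoded by $p_c$ and the forcing scaling encoded by $\ell$ and $\varrho$. One must check that all four time-integrals (in $L^{p_c}$ and in $L^{r}$, for $\mathcal F$ and for $\mathcal N$) converge at both endpoints. Convergence at $\tau=t$ needs every heat-smoothing exponent below $1$, which is where $\delta<\frac{2}{N}$ and $-1<\varrho<0$ are spent, and which pins the lower end of the window for $r$; convergence at $\tau=0$ of the nonlinear integral needs $r<p\,p_c$, and for such an $r$ to exist one needs $p_c>1$, which is guaranteed by the Fujita-type threshold $p\ge\frac{N-2\varrho-N\delta}{N-2\varrho-2}$. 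This same threshold is what renders the forcing exponent $\ell$ admissible and ensures that the forcing-driven part of $u$, once fed into $\|u\|_q^\alpha|u|^p$ and reintegrated, decays no slower than the $X$-norm tolerates. Verifying that the window $\max\{p,\tfrac{N(p-1)}{2}\}<r<p\,p_c$ is nonempty throughout the parameter range, and that the nonlocal factor, which couples the forcing-driven and homogeneous parts of $u$ inside a single power, does not spoil the contraction, is the delicate bookkeeping on which the argument ultimately rests.
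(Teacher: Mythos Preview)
Your proposal is correct and follows the paper's route: a Banach fixed-point argument in a time-weighted Lebesgue ball, driven by the $L^a$--$L^b$ heat smoothing \eqref{smooth-effect}, Beta integrals for the time convolutions, and the scaling identity $1-\tfrac{N(p-1)}{2r}-\alpha\theta(q)-p\theta(r)=-\theta(r)$ that encodes $p_c$; the paper packages the verification that the admissible window for the auxiliary exponent $r$ is nonempty into a preparatory lemma (Lemma~\ref{tool}) rather than leaving it as ``delicate bookkeeping''. The one substantive variation is that you carry the unweighted $L^{p_c}$ norm alongside the weighted $L^r$ norm and interpolate between them to control $\|u(t)\|_q$ with the weight $t^{-\theta(q)}$, whereas the paper works in the single weighted $L^r$ space $\mathbf{Y}$ and bounds $\|u(t)\|_q$ with the same weight $t^{-\beta}=t^{-\theta(r)}$; your two-norm treatment makes the role of the hypothesis $q\ge N(p-1)/2$ (equivalently $p_c\le q\le r$) transparent and delivers the $C([0,\infty),L^{p_c})$ regularity for free.
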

The outline of the article is as follows. In the next section, we recall some basic facts and useful tools. Section \ref{S3} is devoted to local well-posedness and unconditional uniqueness, where the proofs of Theorem \ref{LWP} and Theorem \ref{Uniq} are provided. In Section \ref{S4}, we examine the nonexistence of global solutions and present the proof of Theorem \ref{Fujita-2-bis}. The global existence of solutions for sufficiently small initial data is addressed in Section \ref{S5}, where the proof of Theorem \ref{GEP} is given.

Throughout the paper, we adhere to the standard convention of denoting positive constants by $C$, which may vary from line to line. Additionally, we use the shorthand notation $\|\cdot\|_{r}$ to represent the Lebesgue norm $\|\cdot\|_{L^r(\mathbb{R}^N)}$ for $1 \leq r \leq \infty$.
\section{Useful tools \& Auxiliary results}
\label{S2}
In this section, we introduce key tools and auxiliary results that are essential for establishing our main results.  Let $\{{\mathbf S}(t)\}_{t\geq 0}$ be the linear heat semigroup defined by ${\mathbf S}(t)\,\varphi=G(\cdot,t)\star\varphi, t>0,$ where $G(\cdot,t)$ is the heat kernel given by
$$
G(x,t)=\left(4\pi t\right)^{-N/2}\,{\rm e}^{-|x|^2/(4t)},\;\;\;t>0,\;\;\;x\in\R^N.
$$
In the subsequent lemma, we gather the smmothing effect regarding the heat semigroup $\mathbf{S}(t)$ that will be helpful for our purposes.

\begin{lem}
\label{S-t}
Let $1\leq a\leq b\leq  \infty$, $t>0$, and $\varphi\in L^a(\R^N)$. Then 
\begin{equation}
       \label{smooth-effect}
    \|{\mathbf S}(t)\,\varphi\|_{L^b(\R^N)}\leq t^{-\frac{N}{2}(\frac{1}{a}-\frac{1}{b})}\|\varphi\|_{L^a(\R^N)}.
    \end{equation}
    \end{lem}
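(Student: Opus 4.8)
The plan is to write $\bs(t)\varphi=G(\cdot,t)\star\varphi$ as a convolution and apply Young's convolution inequality, after which the whole estimate reduces to computing an explicit Gaussian integral. First I would fix the exponent $r\in[1,\infty]$ determined by the Young relation $1+\frac1b=\frac1a+\frac1r$, equivalently $\frac1r=1-\left(\frac1a-\frac1b\right)$. Since $1\le a\le b\le\infty$ forces $\frac1a-\frac1b\in[0,1]$, this $r$ is well defined and satisfies $r\ge 1$. Young's inequality then gives
$$\|\bs(t)\varphi\|_{L^b(\R^N)}=\|G(\cdot,t)\star\varphi\|_{L^b(\R^N)}\le \|G(\cdot,t)\|_{L^r(\R^N)}\,\|\varphi\|_{L^a(\R^N)},$$
so everything reduces to bounding the $L^r$ norm of the heat kernel.

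Second, I would compute $\|G(\cdot,t)\|_r$ explicitly for $r<\infty$. Using the definition of $G$ together with the standard Gaussian integral $\int_{\R^N}{\rm e}^{-r|x|^2/(4t)}\,dx=(4\pi t/r)^{N/2}$, a direct calculation yields
$$\|G(\cdot,t)\|_{r}=(4\pi t)^{-\frac N2\left(1-\frac1r\right)}\,r^{-\frac{N}{2r}}=(4\pi)^{-\frac N2\left(\frac1a-\frac1b\right)}\,r^{-\frac{N}{2r}}\,t^{-\frac N2\left(\frac1a-\frac1b\right)},$$
where in the last step I used $1-\frac1r=\frac1a-\frac1b$. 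This already produces exactly the claimed power $t^{-\frac N2(\frac1a-\frac1b)}$, so it only remains to check that the numerical prefactor is at most $1$. Since $4\pi>1$ and the exponent $-\frac N2\left(\frac1a-\frac1b\right)\le 0$, we have $(4\pi)^{-\frac N2(\frac1a-\frac1b)}\le 1$; and since $r\ge 1$ we have $r^{-N/(2r)}=\exp\!\left(-\tfrac{N}{2r}\ln r\right)\le 1$. Hence $\|G(\cdot,t)\|_{r}\le t^{-\frac N2(\frac1a-\frac1b)}$, which combined with the Young estimate gives the lemma.

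Finally I would dispose of the two endpoints. When $b=\infty$ the relation forces $r=a'$ (the Hölder conjugate of $a$) and Young degenerates into Hölder's inequality, so $\|\bs(t)\varphi\|_\infty\le\|G(\cdot,t)\|_{a'}\|\varphi\|_a$, estimated by the same Gaussian computation; when $a=b$ one has $r=1$ and $\|G(\cdot,t)\|_1=1$ by conservation of mass, recovering the contraction bound $\|\bs(t)\varphi\|_a\le\|\varphi\|_a$. There is no genuine obstacle here — the argument is a routine convolution estimate. The only point requiring care is tracking the explicit constant $(4\pi)^{-\frac N2(\frac1a-\frac1b)}\,r^{-N/(2r)}$ and verifying it does not exceed $1$, so that the inequality holds with no multiplicative constant, together with the correct degeneration of Young's inequality into Hölder at the endpoint $b=\infty$.
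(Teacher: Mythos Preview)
Your argument is correct and is precisely the standard proof via Young's convolution inequality together with the explicit Gaussian computation; note, however, that the paper does not supply a proof of this lemma at all --- it is merely stated as a known smoothing estimate --- so there is nothing to compare against beyond confirming that your route is the canonical one.
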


We now state a comparison principle for \eqref{main}. 
\begin{lem} \label{R-Maximum}
Let \( N \geq 1 \), \( p, q \geq 1 \), \( \alpha \geq 0 \), and \( \varrho > -1 \). Suppose \( u \) is a global mild solution to problem \eqref{main} with initial data \( 0 \leq u_0 \in L^1(\R^N)\), and assume that condition \eqref{Ass-blow-w} holds. Then, there exists a positive constant \( \mathtt{C}_0 \), depending only on \( N \) and \( u_0 \), such that the following estimate holds:
\begin{equation}
\label{comp-est}
 u(x,t)\geqslant \mathtt{C}_0 t^{-\frac{N}{2}} \exp\left({-\frac{|x|^2}{t}}\right),\quad t\geq 1,\, x\in\R^N.
\end{equation}    
\end{lem}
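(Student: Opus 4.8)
The plan is to exploit the Duhamel representation \eqref{Duhamel} together with the positivity-preserving property of the heat semigroup, discarding the two nonnegative contributions so as to reduce the claim to a Gaussian lower bound for the linear part $\bs(t)u_0$. First I would argue that each of the three terms on the right-hand side of \eqref{Duhamel} is nonnegative. Since the heat kernel satisfies $G(\cdot,t)>0$ and $u_0\geq 0$, the first term $\bs(t)u_0$ is nonnegative; since $\|u(\tau)\|_q^\alpha\geq 0$ and $|u(\tau)|^p\geq 0$, the integrand of the second term is nonnegative, and hence so is its heat-convolution.

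The crucial point is the third (forcing) term. Writing it out,
\[
\int_0^t \tau^\varrho\,\bs(t-\tau)\bw(x)\,d\tau = \int_0^t \tau^\varrho \bigl(4\pi(t-\tau)\bigr)^{-N/2}\left(\int_{\R^N} {\rm e}^{-\frac{|x-y|^2}{4(t-\tau)}}\bw(y)\,dy\right)d\tau,
\]
I would set $\lambda = 4(t-\tau) > 0$ and invoke assumption \eqref{Ass-blow-w}, which guarantees that the inner integral is $\geq 0$ for every $x\in\R^N$ and every $\tau\in(0,t)$. Combined with $\tau^\varrho>0$, this makes the entire forcing term nonnegative. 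Consequently $u(x,t)\geq \bs(t)u_0(x)$ for all $t>0$ and $x\in\R^N$, and it remains only to bound the linear evolution from below.

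For the lower bound on $\bs(t)u_0$, I would fix $R>0$ large enough that $M:=\int_{|y|\leq R}u_0(y)\,dy>0$, which is possible because $u_0\geq 0$ is nontrivial. For $|y|\leq R$ and arbitrary $x$ one has $|x-y|^2\leq 2|x|^2+2R^2$, so for $t\geq 1$,
\[
{\rm e}^{-\frac{|x-y|^2}{4t}}\geq {\rm e}^{-\frac{R^2}{2}}\,{\rm e}^{-\frac{|x|^2}{2t}}\geq {\rm e}^{-\frac{R^2}{2}}\,{\rm e}^{-\frac{|x|^2}{t}}.
\]
Restricting the convolution integral to the ball $\{|y|\leq R\}$ then yields
\[
\bs(t)u_0(x)\geq (4\pi)^{-N/2}\,{\rm e}^{-R^2/2}\,M\, t^{-N/2}\,{\rm e}^{-|x|^2/t},
\]
which is precisely \eqref{comp-est} with $\mathtt{C}_0 := (4\pi)^{-N/2}{\rm e}^{-R^2/2}M$, a constant depending only on $N$ and $u_0$.

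The only genuine obstacle is controlling the sign of the forcing term: this is exactly where hypothesis \eqref{Ass-blow-w} enters, and it is the reason that this (stronger) condition, rather than merely \eqref{Ass-blow-ww}, is imposed in the comparison principle. The two remaining ingredients—the positivity of the heat semigroup and the elementary Gaussian lower bound—are routine, the only subtlety being that $u_0$ must be nontrivial so that $M>0$ and hence $\mathtt{C}_0>0$.
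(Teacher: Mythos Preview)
Your argument is correct and follows essentially the same route as the paper: use the Duhamel formula \eqref{Duhamel}, discard the nonlinear and forcing contributions by positivity (invoking \eqref{Ass-blow-w} for the latter), and then derive a Gaussian lower bound for $\bs(t)u_0$. The only cosmetic difference is in this last step: the paper uses $|x-y|^2\leq 2|x|^2+2|y|^2$ on all of $\R^N$ to obtain the constant $\mathtt{C}_0=(4\pi)^{-N/2}\int_{\R^N}e^{-|y|^2/2}u_0(y)\,dy$, whereas you truncate to a ball $\{|y|\leq R\}$ to get $\mathtt{C}_0=(4\pi)^{-N/2}e^{-R^2/2}\int_{|y|\leq R}u_0(y)\,dy$; both choices are valid and depend only on $N$ and $u_0$.
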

\begin{proof}
Let \( v \) satisfy the following problem:
\[
\begin{cases}
\partial_t v - \Delta v = t^{\varrho} \mathbf{w}(x), & (x, t) \in \mathbb{R}^N \times (0, \infty), \\
v(x, 0) = u_0(x), & x \in \mathbb{R}^N.
\end{cases}
\]
The solution \( v \) can be expressed as
\[
v(x, t) = \int_{\mathbb{R}^N} G(x - y, t) u_0(y) \, dy + \int_0^t \int_{\mathbb{R}^N} G(x - y, t - \tau) \tau^{\varrho} \mathbf{w}(y) \, dy \, d\tau,
\]
where \( G \) is the heat kernel. Using \eqref{Duhamel} and the assumption \eqref{Ass-blow-w}, we observe that for \( t \geq 1 \),
\[
\begin{split}
u(x, t) &\geq v(x, t) \\
&\geq \int_{\mathbb{R}^N} G(x - y, t) u_0(y) \, dy \\
&= \int_{\mathbb{R}^N} (4\pi t)^{-\frac{N}{2}} \exp\left(-\frac{|x - y|^2}{4t}\right) u_0(y) \, dy \\
&\geq (4\pi t)^{-N/2} \exp\left(-\frac{|x|^2}{2t}\right) \int_{\mathbb{R}^N} \exp\left(-\frac{|y|^2}{2}\right) u_0(y) \, dy.
\end{split}
\]
Finally, we obtain the desired estimate \eqref{comp-est} with the constant
\[
\mathtt{C}_0 = (4\pi)^{-N/2} \int_{\mathbb{R}^N} \exp\left(-\frac{|y|^2}{2}\right) u_0(y) \, dy.
\]
\end{proof}
\begin{rem}
    {\rm A direct consequence of \eqref{comp-est} is that, for any  $1 \leq q < \infty $, the following estimate holds:
    \begin{equation}
        \label{comp-est-q}
        \|u(t)\|_q\geq C t^{-\frac{N}{2}\left(1-\frac{1}{q}\right)},\quad t\geq 1,
    \end{equation}
    where the constant $C$ is given by $C=\pi^{N/2q}q^{-N/2q} \mathtt{C}_0.$}
\end{rem}
The comparison of the two conditions \eqref{Ass-blow-w} and \eqref{Ass-blow-ww} is discussed in the following statement.
\begin{lem}
    \label{Assumptions-w}
Let $\bw \in C(\R^N) \cap L^1(\R^N)$. The following hold:
\begin{enumerate}
    \item[(i)] The assertion \eqref{Ass-blow-w} implies $\int_{\R^N}\, \bw(x)dx\geq 0$.
    \item[(ii)] There exists a function $\bw \in C(\R^N) \cap L^1(\R^N)$ satisfying \eqref{Ass-blow-ww} and the additional condition:
    \begin{equation}
        \label{Counter-examp}
        \int_{\R^N} {\rm e}^{-|y|^2} \bw(y) \, dy \leq 0.
    \end{equation}
    This demonstrates that \eqref{Ass-blow-ww} does not, in general, imply \eqref{Ass-blow-w}.
\end{enumerate}
\end{lem}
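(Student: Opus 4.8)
The plan is to treat the two assertions separately, as they are essentially independent. For part (i), the goal is to show that \eqref{Ass-blow-w} forces $\int_{\R^N}\bw(x)\,dx\geq 0$. The natural idea is to exploit the freedom in the parameter $\lambda>0$ appearing in \eqref{Ass-blow-w}. Fixing $x=0$ for simplicity (the infimum over $x$ only makes the hypothesis stronger), condition \eqref{Ass-blow-w} gives
\[
\int_{\R^N}{\rm e}^{-\frac{|y|^2}{\lambda}}\,\bw(y)\,dy\geq 0\qquad\text{for all }\lambda>0.
\]
As $\lambda\to\infty$, the Gaussian ${\rm e}^{-|y|^2/\lambda}\to 1$ pointwise and is dominated by the constant $1$, so since $\bw\in L^1(\R^N)$ the dominated convergence theorem yields $\int_{\R^N}\bw(y)\,dy\geq 0$. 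This is the clean way to conclude; I would state explicitly that the integrand is bounded by $|\bw|\in L^1$ to justify passing the limit inside.

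For part (ii), the task is constructive: I must exhibit a single $\bw\in C(\R^N)\cap L^1(\R^N)$ with $\int\bw>0$ but $\int{\rm e}^{-|y|^2}\bw\leq 0$. The guiding intuition is that $\int\bw$ weights all of space uniformly, whereas $\int{\rm e}^{-|y|^2}\bw$ heavily downweights the region far from the origin. So I would design $\bw$ to be \emph{negative near the origin} and \emph{positive on an annulus far out}, with the far-out positive mass large enough to make the total integral positive, yet so far from the origin that its Gaussian-weighted contribution is negligible. Concretely, I would take something like $\bw(x)=-\phi(x)+c\,\psi(|x|)$, where $\phi$ is a fixed nonnegative bump concentrated near $0$ (say a radial bump supported in the unit ball), $\psi$ is a nonnegative bump supported in a far annulus $\{R\leq|x|\leq R+1\}$, and $c>0$, $R>0$ are parameters to be tuned. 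Choosing $c$ large compensates for the negative mass of $\phi$ so that $\int\bw=-\int\phi+c\int\psi>0$; choosing $R$ large makes ${\rm e}^{-|y|^2}$ on the support of $\psi$ as small as one wishes, so that $c\int{\rm e}^{-|y|^2}\psi(|y|)\,dy\leq \int{\rm e}^{-|y|^2}\phi(y)\,dy$, giving $\int{\rm e}^{-|y|^2}\bw\leq 0$. I would verify continuity and integrability are automatic since both bumps are continuous and compactly supported.

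The main subtlety — and the only place requiring care — is the order of quantifiers in the tuning: one cannot fix $c$ first and then $R$, because enlarging $c$ also enlarges the (already tiny) positive Gaussian contribution. The cleaner route is to introduce the small target $\eta:=\int{\rm e}^{-|y|^2}\phi(y)\,dy>0$ as a fixed threshold, then note that for the far annulus $\sup_{R\leq|y|\leq R+1}{\rm e}^{-|y|^2}={\rm e}^{-R^2}$, so $c\int{\rm e}^{-|y|^2}\psi\leq c\,{\rm e}^{-R^2}\|\psi\|_{1}$. Since this must stay below $\eta$ \emph{and} $c$ is forced to be large by the positivity of $\int\bw$, I would first determine $c$ from the condition $c\int\psi>\int\phi$ (a fixed lower bound on $c$), and only afterwards choose $R$ large enough that $c\,{\rm e}^{-R^2}\|\psi\|_{1}\leq\eta$. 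This sequencing resolves the tension and completes the construction, establishing that \eqref{Ass-blow-ww} does not imply \eqref{Ass-blow-w}.
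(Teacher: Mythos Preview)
Your argument for part (i) is exactly the paper's: specialize \eqref{Ass-blow-w} (e.g.\ at $x=0$), let $\lambda\to\infty$, and pass to the limit by dominated convergence using $|\bw|\in L^1$ as majorant.

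For part (ii) your approach is correct but differs from the paper's. The paper writes down a closed-form counterexample,
\[
\bw(y)=a\,e^{-|y|^2}-e^{-2|y|^2},\qquad 2^{-N/2}<a\le (2/3)^{N/2},
\]
and verifies both conditions by the Gaussian integral formula $\int_{\R^N}e^{-\theta|y|^2}\,dy=(\pi/\theta)^{N/2}$: one gets $\int\bw=\pi^{N/2}(a-2^{-N/2})>0$ and $\int e^{-|y|^2}\bw=\pi^{N/2}(a\,2^{-N/2}-3^{-N/2})\le 0$. Your construction---a negative bump near the origin plus a positive bump pushed out to a far annulus---reaches the same conclusion by a soft tuning argument rather than explicit computation. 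The paper's route is shorter and entirely explicit; yours is more flexible and makes the underlying mechanism (the Gaussian weight kills mass at infinity) more transparent.

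One small point to tidy in your write-up: you propose to fix $c$ from $c\int\psi>\int\phi$ and only afterwards pick $R$, but since the support of $\psi$ lies in $\{R\le|x|\le R+1\}$, the quantity $\int\psi=\|\psi\|_1$ already depends on $R$, so the ``fixed lower bound on $c$'' is not literally fixed. This is harmless---simply normalize each $\psi$ so that $\|\psi\|_1=1$; then the two constraints read $c>\int\phi$ and $c\,e^{-R^2}\le\eta$, and your sequencing (fix $c$, then take $R$ large) goes through cleanly.
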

\begin{proof}
The proof of part (i) follows by taking the limit in \eqref{Ass-blow-w} as $\lambda\to\infty$ and applying the Lebesgue Dominated Convergence Theorem. \\
For part (ii), consider the following simple example 
\begin{equation}
    \label{examp}
    \bw(y) = a\,{\rm e}^{-|y|^2} - {\rm e}^{-2|y|^2}, \quad 2^{-N/2}< a \leq (\frac{2}{3})^{N/2}.
\end{equation}
To analyze this, we use the well-known result for Gaussian integrals:
\begin{equation}
\label{Gauss-1}
    \int_{\mathbb{R}^N} {\rm e}^{-\theta |y|^2} dy = \left(\frac{\pi}{\theta}\right)^{N/2}, \quad \theta > 0.
\end{equation}
Applying \eqref{Gauss-1}, we compute the following integrals:
\begin{equation}
 \label{Gauss-2}   
\begin{split}
    \int_{\mathbb{R}^N} \bw(y) \, dy &= \pi^{N/2} \left(a - 2^{-N/2}\right), \\
    \int_{\mathbb{R}} {\rm e}^{-y^2} \bw(y) \, dy &= \pi^{N/2} \left(a2^{-N/2} - 3^{-N/2}\right).
\end{split}
\end{equation}
The conclusion follows directly from \eqref{Gauss-2}.
\end{proof}

We also recall the well known $\varepsilon-$Young inequality.
\begin{lem}
\label{Young}
Let $a, b\geq 0$, $p, q>1$ and $\varepsilon>0$ such that $\frac{1}{p}+\frac{1}{q}=1.$
Then
\begin{equation}
\label{Young-Ineq}
ab\leq \varepsilon\,a^p
+(p\varepsilon)^{-q/p}\,\frac{b^q}{q}.
\end{equation}
\end{lem}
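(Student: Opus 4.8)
The plan is to reduce the stated inequality to the classical Young inequality
\[
AB \le \frac{A^p}{p} + \frac{B^q}{q}, \qquad A, B \ge 0,
\]
and then to recover the parameter $\varepsilon$ by a scaling substitution that leaves the product $ab$ invariant.

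First I would dispose of the degenerate cases: if $a = 0$ or $b = 0$, then the left-hand side of \eqref{Young-Ineq} vanishes while the right-hand side is nonnegative, so the inequality holds trivially. Assuming $a, b > 0$, I would establish the classical inequality using the convexity of the exponential function. Writing
\[
ab = \exp\Bigl(\tfrac{1}{p}\log a^p + \tfrac{1}{q}\log b^q\Bigr),
\]
and using $\tfrac{1}{p} + \tfrac{1}{q} = 1$ together with the convexity bound $\exp\bigl(\tfrac{1}{p} x + \tfrac{1}{q} y\bigr) \le \tfrac{1}{p}\,e^{x} + \tfrac{1}{q}\,e^{y}$ applied with $x = \log a^p$ and $y = \log b^q$, I obtain $ab \le \tfrac{a^p}{p} + \tfrac{b^q}{q}$. (Equivalently, one may invoke the concavity of the logarithm, or a direct one-variable calculus argument minimizing $t \mapsto \tfrac{t^p}{p} + \tfrac{b^q}{q} - tb$.)

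Next I would introduce the rescaling $A = (p\varepsilon)^{1/p}\,a$ and $B = (p\varepsilon)^{-1/p}\,b$, chosen precisely so that $AB = ab$ remains unchanged. Substituting into the classical inequality and tracking the exponents gives
\[
\frac{A^p}{p} = \frac{(p\varepsilon)\,a^p}{p} = \varepsilon\,a^p, \qquad \frac{B^q}{q} = (p\varepsilon)^{-q/p}\,\frac{b^q}{q},
\]
which together yield exactly \eqref{Young-Ineq}. There is no genuine obstacle in this argument; the only points requiring care are the treatment of the boundary cases $a = 0$ or $b = 0$ and the bookkeeping of the exponents in the rescaling, so that the resulting constant matches the stated factor $(p\varepsilon)^{-q/p}$.
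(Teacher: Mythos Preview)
Your proof is correct: the reduction to the classical Young inequality via convexity of the exponential, followed by the rescaling $A=(p\varepsilon)^{1/p}a$, $B=(p\varepsilon)^{-1/p}b$, recovers the stated constant exactly. The paper itself gives no proof of this lemma, simply recalling it as the well-known $\varepsilon$-Young inequality, so there is nothing to compare against; your argument supplies precisely the standard justification that the paper takes for granted.
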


When applying the test-function method to analyze the blow-up, we will utilize the following result.
\begin{lem}
    \label{lapl-g}
    Let $ g \in C^\infty([0, \infty)) $ be a smooth, nonnegative function such that $ g(\tau) = 0 $ for all $ \tau \geq \tau_0 $, where $ \tau_0 > 0 $ is a fixed constant. For $ T > 0 $ and $ \theta > 2 $, define the function $ g_{T,\theta}: \mathbb{R}^N \to \mathbb{R} $ by
\begin{equation}
      \label{g-T}
      g_{T,\theta}(x)=\left(g\left(\frac{|x|^2}{T}\right)\right)^{\theta}.
    \end{equation}
Then, there exists a constant $ C > 0 $, depending only on $ g $, $ \theta $, and the dimension $ N $, such that
\begin{equation}
        \label{Lap-g-est}
        \Big|\Delta\,g_{T,\theta}(x)\Big|\leq \frac{C}{T}\left(g\left(\frac{|x|^2}{T}\right)\right)^{\theta-2}.
    \end{equation}
where $ \Delta $ denotes the Laplacian operator.
\end{lem}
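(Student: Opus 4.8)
The plan is to reduce everything to a one-variable chain-rule computation. Writing $s = |x|^2/T$ and $\psi(x) = g(s)$, so that $g_{T,\theta} = \psi^\theta$, I would first record the elementary identities $\nabla s = 2x/T$, $|\nabla s|^2 = 4|x|^2/T^2 = 4s/T$, and $\Delta s = 2N/T$. Since $x \mapsto |x|^2$ is smooth (and even) and $g \in C^\infty([0,\infty))$, the map $x \mapsto \psi(x)$ is smooth, and because $\theta > 2$ the power $\psi^\theta$ is of class $C^2$: near an interior zero $s_0$ of the nonnegative function $g$ one has $g(s_0) = g'(s_0) = 0$, so $g$ vanishes to order at least two and $\psi^\theta$ to order at least $2\theta > 4$. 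Thus $\Delta g_{T,\theta}$ is a genuine continuous function, and not merely a formal expression.

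Next I would apply the standard identity for the Laplacian of a power,
\[
\Delta g_{T,\theta} = \theta(\theta-1)\,\psi^{\theta-2}\,|\nabla\psi|^2 + \theta\,\psi^{\theta-1}\,\Delta\psi,
\]
and then substitute $\nabla\psi = g'(s)\nabla s$ and $\Delta\psi = g''(s)|\nabla s|^2 + g'(s)\Delta s$. Using the identities above, this yields, at every point where $g(s) > 0$,
\[
\Delta g_{T,\theta} = \frac{4\theta(\theta-1)s}{T}\,g(s)^{\theta-2}\,g'(s)^2 + \frac{4\theta s}{T}\,g(s)^{\theta-1}\,g''(s) + \frac{2N\theta}{T}\,g(s)^{\theta-1}\,g'(s).
\]

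The estimate then follows by factoring $g(s)^{\theta-2}$ out of every term, writing $g(s)^{\theta-1} = g(s)^{\theta-2}\,g(s)$ in the last two. Since $g$ is supported in $[0,\tau_0]$, on that support one has $0 \le s \le \tau_0$, while $g$, $g'$, $g''$ are bounded by their sup-norms; hence each bracketed coefficient is bounded by $C/T$ with $C = C(g,\theta,N)$, giving $|\Delta g_{T,\theta}| \le \frac{C}{T}\,g(s)^{\theta-2}$ on $\{g(s) > 0\}$. Finally I would verify the bound on $\{g(s) = 0\}$: there the right-hand side vanishes because $\theta - 2 > 0$, and the displayed formula shows $\Delta g_{T,\theta} = 0$ as well, since at an interior zero $g'(s) = 0$ while $g(s)^{\theta-1} = 0$; by continuity the inequality extends to all of $\mathbb{R}^N$.

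The only delicate point — and the place where the hypothesis $\theta > 2$ is genuinely used — is the behaviour near the zero set of $g$: I must ensure that the chain-rule formula, valid a priori only on $\{g > 0\}$, produces a valid pointwise bound everywhere. This hinges precisely on $\theta - 2 > 0$, so that the powers $g^{\theta-2}$ and $g^{\theta-1}$ remain continuous and vanish on $\{g = 0\}$, guaranteeing both the $C^2$ regularity of $g_{T,\theta}$ and the vanishing of the Laplacian there. Everything else is routine bookkeeping of constants depending on $\|g\|_\infty$, $\|g'\|_\infty$, $\|g''\|_\infty$, $\tau_0$, $\theta$, and $N$.
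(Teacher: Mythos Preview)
Your proof is correct and follows essentially the same route as the paper: both compute the Laplacian explicitly (you via the chain rule and the identity $\Delta(\psi^\theta)=\theta(\theta-1)\psi^{\theta-2}|\nabla\psi|^2+\theta\psi^{\theta-1}\Delta\psi$, the paper via the radial formula $\Delta v=v''+\frac{N-1}{r}v'$), arrive at the identical expression, and bound it using the compact support $s\le\tau_0$ and the sup-norms of $g,g',g''$. Your additional discussion of $C^2$ regularity and the behaviour on $\{g=0\}$ is more careful than the paper's treatment, which simply writes down the formula without addressing the zero set.
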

\begin{proof}
By performing the change of variable $ y = \frac{|x|^2}{T} $, we express  
$$
g_{T,\theta}(x) = g^\theta(y) = v(r), \quad \text{where } r = |x|.
$$
To compute the Laplacian, we utilize the well-known formula for radially symmetric functions:  
\begin{equation}
\label{Lap-Rad}
\Delta v = v'' + \frac{N-1}{r} v'.
\end{equation}
Applying this formula, we derive  
\begin{equation}
\label{Lap-g-theta}
\Delta\,g_{T,\theta}(x) = \Bigg(2\theta N g'(y)g(y) + 4\theta y g''(y)g(y) + 4\theta(\theta-1)y g'(y)^2\Bigg) \frac{g(y)^{\theta-2}}{T}.  
\end{equation}
Since $ g $ has compact support, its derivatives $ g(y) $, $ g'(y) $, and $ g''(y) $ are bounded. Furthermore, since $ y \leq \tau_0 $, it follows from \eqref{Lap-g-theta} that  
$$
|\Delta g^{\theta}(y)| \leq \frac{C}{T} g(y)^{\theta - 2},
$$
where the positive constant $ C $ depends on $ N $, $ \theta $, $ \tau_0 $, $ \|g\|_{L^\infty} $, $ \|g'\|_{L^\infty} $, and $ \|g''\|_{L^\infty} $.  
This completes the proof.
\end{proof}
To establish well-posedness and apply a fixed-point argument, we will utilize the following estimates, which can be easily derived.
\begin{lem}
    \label{Contrac-estim}
    Let $(E, \|\cdot\|)$ be a normed vector space, $p>1$, $a, b\geq 0$, $\alpha\geq 0$, and $x,y\in E$. The following inequalities hold
    \begin{equation}
    \label{key-contrac}
         \Big|a^p\|x\|^\alpha-b^p\|y\|^\alpha\Big|\lesssim  \left\{\begin{array}{ll} \|y\|^\alpha|a-b|(a^{p-1}+b^{p-1})+a^p\|x-y\|(\|x\|^{\alpha-1}+\|y\|^{\alpha-1}) & \text{if}\,\, \alpha\geq 1\\ \|y\|^\alpha|a-b|(a^{p-1}+b^{p-1})+a^p\|x-y\|^\alpha  & \text{if}\,\, \alpha<1\end{array} \right. 
    \end{equation}
\end{lem}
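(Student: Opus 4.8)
The plan is to insert a cross term and reduce the estimate to two elementary scalar inequalities. Writing
\[
a^p\|x\|^\alpha-b^p\|y\|^\alpha=(a^p-b^p)\,\|y\|^\alpha+a^p\bigl(\|x\|^\alpha-\|y\|^\alpha\bigr),
\]
the triangle inequality gives
\[
\Bigl|a^p\|x\|^\alpha-b^p\|y\|^\alpha\Bigr|\le |a^p-b^p|\,\|y\|^\alpha+a^p\,\bigl|\|x\|^\alpha-\|y\|^\alpha\bigr|.
\]
The first summand will produce the common first term of \eqref{key-contrac} in both regimes, while the second summand is where the dichotomy $\alpha\ge1$ versus $\alpha<1$ appears.

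For the first summand I would apply the mean value theorem to $r\mapsto r^p$ to get, for $p>1$ and $a,b\ge0$, the bound $|a^p-b^p|\le p\,|a-b|\,\max(a,b)^{p-1}\lesssim |a-b|\,(a^{p-1}+b^{p-1})$; multiplying by $\|y\|^\alpha$ yields the term $\|y\|^\alpha|a-b|(a^{p-1}+b^{p-1})$. For the second summand, the reverse triangle inequality $\bigl|\|x\|-\|y\|\bigr|\le\|x-y\|$ lets me reduce to bounding $|s^\alpha-t^\alpha|$ with $s=\|x\|$, $t=\|y\|$. When $\alpha\ge1$, the same mean value estimate applied to $r\mapsto r^\alpha$ gives $|s^\alpha-t^\alpha|\lesssim|s-t|\,(s^{\alpha-1}+t^{\alpha-1})$ (using $\max(s,t)^{\alpha-1}\le s^{\alpha-1}+t^{\alpha-1}$ since $\alpha-1\ge0$), which after multiplying by $a^p$ produces $a^p\|x-y\|(\|x\|^{\alpha-1}+\|y\|^{\alpha-1})$. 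When $0\le\alpha<1$, the map $r\mapsto r^\alpha$ is concave and vanishes at $0$, hence subadditive, giving the H\"older-type bound $|s^\alpha-t^\alpha|\le|s-t|^\alpha$; multiplying by $a^p$ produces $a^p\|x-y\|^\alpha$. Collecting the two summands in each case yields \eqref{key-contrac}.

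The argument is elementary and the only genuine point of care is the case $\alpha<1$: there the mean-value form degenerates because the exponent $\alpha-1$ is negative and the factor $\max(s,t)^{\alpha-1}$ blows up as the arguments approach $0$, so one must instead exploit the subadditivity (equivalently, the $\alpha$-H\"older continuity) of $r\mapsto r^\alpha$. The reverse triangle inequality is what transfers these scalar estimates back to the norm on $E$, and this is the step that makes the bound valid for an arbitrary normed space rather than just for scalars.
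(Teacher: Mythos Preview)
Your argument is correct and is exactly the natural elementary derivation. The paper does not actually give a proof of this lemma; it only introduces it with the remark that the estimates ``can be easily derived,'' and your cross-term decomposition together with the mean-value bound for $r\mapsto r^p$ (and for $r\mapsto r^\alpha$ when $\alpha\ge1$) and the subadditivity of $r\mapsto r^\alpha$ when $0\le\alpha<1$ is precisely the standard route one has in mind for such a statement.
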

The following classical result will be employed to establish the desired regularity for the local solution of \eqref{main} in the $L^q$ space.
\begin{lem}
    \label{C-L-q}
    Let $1\leq r <\infty$, $T>0$, and $F\in L^1((0,T), L^r(\R^N))$. Then,
    \begin{equation}
        \label{v-CLq}
        \mathbf{v}(t)=: \int_0^t\,{\rm e}^{(t-s)\Delta}\,F(s)\,ds \in \text{C}([0,T], L^r(\R^N)).
    \end{equation}
\end{lem}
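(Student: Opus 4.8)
The plan is to exploit two classical properties of the heat semigroup on $L^r(\R^N)$, valid for $1\le r<\infty$: the contraction bound $\|\mathbf{S}(t)\varphi\|_r\le\|\varphi\|_r$, which is exactly Lemma \ref{S-t} with $a=b=r$ (the exponent $-\tfrac N2(\tfrac1r-\tfrac1r)$ vanishing), and the strong continuity $\|\mathbf{S}(h)\varphi-\varphi\|_r\to 0$ as $h\to 0^+$ for every $\varphi\in L^r$, a standard approximate-identity fact. Before anything else I would record that, since $s\mapsto\|F(s)\|_r$ belongs to $L^1(0,T)$ and $s\mapsto\mathbf{S}(t-s)F(s)$ is strongly measurable, the Bochner integral defining $\mathbf{v}(t)$ converges in $L^r$ with $\|\mathbf{v}(t)\|_r\le\int_0^t\|F(s)\|_r\,ds$; thus $\mathbf{v}(t)\in L^r$ for every $t\in[0,T]$, identifying $\mathbf{S}(t-s)$ with ${\rm e}^{(t-s)\Delta}$.

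The heart of the argument is a continuity estimate at an arbitrary fixed $t_0\in[0,T]$. I would first treat $t>t_0$ and split
\[
\mathbf{v}(t)-\mathbf{v}(t_0)=\int_{t_0}^{t}\mathbf{S}(t-s)F(s)\,ds+\int_0^{t_0}\big[\mathbf{S}(t-s)-\mathbf{S}(t_0-s)\big]F(s)\,ds=:I_1+I_2.
\]
For $I_1$ the contraction bound gives $\|I_1\|_r\le\int_{t_0}^{t}\|F(s)\|_r\,ds$, which tends to $0$ as $t\downarrow t_0$ by absolute continuity of the Lebesgue integral. For $I_2$ I would use the semigroup identity $\mathbf{S}(t-s)=\mathbf{S}(t-t_0)\mathbf{S}(t_0-s)$ to rewrite the integrand as $[\mathbf{S}(t-t_0)-I]\,\mathbf{S}(t_0-s)F(s)$; its $L^r$-norm is dominated by $2\|F(s)\|_r\in L^1(0,t_0)$ via contraction and, for each fixed $s$, tends to $0$ as $t\downarrow t_0$ by strong continuity applied to the fixed vector $\mathbf{S}(t_0-s)F(s)\in L^r$. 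The Dominated Convergence Theorem, applied along any sequence $t\downarrow t_0$, then yields $\|I_2\|_r\to 0$, proving right-continuity at $t_0$.

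The case $t<t_0$ is handled by the symmetric decomposition $\mathbf{v}(t_0)-\mathbf{v}(t)=\int_t^{t_0}\mathbf{S}(t_0-s)F(s)\,ds+\int_0^{t}[\mathbf{S}(t_0-s)-\mathbf{S}(t-s)]F(s)\,ds$, where now $\mathbf{S}(t_0-s)-\mathbf{S}(t-s)=[\mathbf{S}(t_0-t)-I]\mathbf{S}(t-s)$, and the same two bounds give left-continuity. Since $t_0\in[0,T]$ was arbitrary, we conclude $\mathbf{v}\in C([0,T],L^r(\R^N))$.

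I expect the only genuinely delicate point to be the passage to the limit in $I_2$: one must combine the strong continuity of $\mathbf{S}$, which is a pointwise-in-$s$ statement about the fixed vector $\mathbf{S}(t_0-s)F(s)$, with the uniform domination by $2\|F(\cdot)\|_r$ so that Dominated Convergence applies. The measurability needed to make sense of the Bochner integral and to invoke DCT is routine, since $F$ is strongly measurable and $t\mapsto\mathbf{S}(t)$ is strongly continuous; everything else reduces to the contraction property and absolute continuity of the integral.
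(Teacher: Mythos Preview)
Your proof is correct and follows essentially the same route as the paper's: the identical decomposition $\mathbf{v}(t)-\mathbf{v}(t_0)=\int_{t_0}^t\mathbf{S}(t-s)F(s)\,ds+\int_0^{t_0}\mathbf{S}(t_0-s)\big(\mathbf{S}(t-t_0)-I\big)F(s)\,ds$, the contraction estimate on the first piece, and dominated convergence together with strong continuity of the semigroup on the second. You are in fact slightly more careful than the paper in treating the left-continuity case $t<t_0$ separately and in spelling out the domination $2\|F(\cdot)\|_r$ needed for the DCT.
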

\begin{proof}
For completeness, we provide a proof of the above lemma. Fix $ 0 \leq t_0 \leq T $, and write
$$
\mathbf{v}(t) - \mathbf{v}(t_0) = \int_{t_0}^t {\rm e}^{(t-s)\Delta} F(s) \, ds + \int_0^{t_0} {\rm e}^{(t_0-s)\Delta} \left( {\rm e}^{(t-t_0)\Delta} F(s) - F(s) \right) \, ds.
$$
Using \eqref{smooth-effect}, it follows that
\begin{equation}
    \label{Regu-CLq}
    \|\mathbf{v}(t)-\mathbf{v}(t_0)\|_r\leq \int_{t_0}^t \|F(s)\|_r ds+\int_0^{t_0}\|{\rm e}^{(t-t_0)\Delta}F(s)-F(s) \|_r ds.
\end{equation}
By the dominated convergence theorem and the fact that $ {\rm e}^{t\Delta}\phi \to \phi $ as $ t \to 0 $ in $ L^r $, we deduce from \eqref{Regu-CLq} that $ \mathbf{v} $ is continuous at $ t_0 $.
\end{proof}
We also make use of the following singular Gronwall inequality \cite[Theorem 3.1, p. 537]{DM1986}
\begin{lem}
\label{SGI}
Let $T>0$ and $0\leq \psi\in C([0,T])$ satisfy
$$
\psi(t)\leq A+M\displaystyle\int_0^t (t-\sigma)^{-\sigma}\psi(\sigma)\,d\tau, \quad \quad 
0\leq t\leq T,
$$
where $0\leq \sigma<1$, {$A\geq 0$, and $M>0$}. Then, 
\begin{equation}
\label{SGI2}
\psi(t)\leq A\, {\mathcal E}_{1-\sigma}\left(M \Gamma(1-\sigma)\,t^{1-\sigma}\right),\quad 0\leq t\leq T,
\end{equation}
where ${\mathcal E}_{1-\sigma}$ is the Mittag-Leffler function 
defined by
$$
{\mathcal E}_{1-\sigma}(z)=\sum_{n=0}^{\infty}\,\frac{z^n}{\Gamma(n(1-\sigma)+1)}.
$$
In particular, if $A=0$ then $\psi\equiv 0$.
\end{lem}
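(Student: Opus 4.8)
The plan is to treat the hypothesis as a fixed-point-style inequality for the monotone linear operator
$$
(\mathcal{T}\phi)(t) = M\int_0^t (t-\tau)^{-\sigma}\phi(\tau)\,d\tau,
$$
reading the singular kernel as $(t-\tau)^{-\sigma}$ with integration variable $\tau$, and to iterate it. Since the kernel is nonnegative and $M>0$, the operator $\mathcal{T}$ is positivity-preserving and monotone: $0\leq\phi_1\leq\phi_2$ implies $\mathcal{T}\phi_1\leq\mathcal{T}\phi_2$. Starting from $\psi\leq A+\mathcal{T}\psi$ and applying $\mathcal{T}$ repeatedly, using linearity to handle the constant $A$, I would establish by induction the partial-sum bound
$$
\psi(t)\leq A\sum_{k=0}^{n}(\mathcal{T}^k 1)(t)+(\mathcal{T}^{n+1}\psi)(t),\qquad 0\leq t\leq T,
$$
for every $n\geq 0$, where $1$ denotes the constant function.

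The key computation is the explicit evaluation of the iterated kernels $\mathcal{T}^k 1$. Using the Beta integral $\int_0^t (t-\tau)^{-\sigma}\tau^{b}\,d\tau = t^{b+1-\sigma}\frac{\Gamma(1-\sigma)\Gamma(b+1)}{\Gamma(b+2-\sigma)}$, a straightforward induction on $k$ gives
$$
(\mathcal{T}^k 1)(t)=\frac{\bigl(M\Gamma(1-\sigma)\,t^{1-\sigma}\bigr)^k}{\Gamma\bigl(k(1-\sigma)+1\bigr)},\qquad k\geq 0.
$$
The inductive step relies precisely on the telescoping of the Gamma factors: applying $\mathcal{T}$ to $t^{k(1-\sigma)}$ produces $t^{(k+1)(1-\sigma)}$ together with a ratio $\Gamma(1-\sigma)\Gamma(k(1-\sigma)+1)/\Gamma((k+1)(1-\sigma)+1)$ that combines with the previous coefficient to give the next one. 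Consequently the partial sums $A\sum_{k=0}^{n}(\mathcal{T}^k 1)(t)$ are exactly the truncated Mittag-Leffler series and converge, as $n\to\infty$, to $A\,\mathcal{E}_{1-\sigma}\bigl(M\Gamma(1-\sigma)\,t^{1-\sigma}\bigr)$.

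It then remains to show the remainder $(\mathcal{T}^{n+1}\psi)(t)$ vanishes as $n\to\infty$. Since $\psi\in C([0,T])$ is bounded, say by $B$, monotonicity gives $(\mathcal{T}^{n+1}\psi)(t)\leq B\,(\mathcal{T}^{n+1}1)(t)=B\,\frac{(M\Gamma(1-\sigma)T^{1-\sigma})^{n+1}}{\Gamma((n+1)(1-\sigma)+1)}$, and this tends to $0$ because $\mathcal{E}_{1-\sigma}$ is an entire function, so the general term of its defining series converges to zero. Passing to the limit in the partial-sum bound yields $\psi(t)\leq A\,\mathcal{E}_{1-\sigma}(M\Gamma(1-\sigma)t^{1-\sigma})$, which is exactly \eqref{SGI2}; the final assertion for $A=0$ is then immediate from $\psi\geq 0$. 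I expect the main obstacle to be the bookkeeping in the Beta-function induction, namely correctly matching the Gamma arguments so that the coefficients collapse into the Mittag-Leffler form, whereas the monotone iteration and the remainder estimate are routine once the operator $\mathcal{T}$ is set up.
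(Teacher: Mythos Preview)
Your argument is correct: the monotone iteration of the Volterra operator with weakly singular kernel, the explicit evaluation of $\mathcal{T}^k 1$ via the Beta integral, and the decay of the remainder through the general term of the Mittag--Leffler series are all carried out properly. Note, however, that the paper does not supply its own proof of this lemma; it is quoted as a known result from \cite[Theorem~3.1, p.~537]{DM1986}. Your iterative proof is precisely the classical one underlying that reference, so there is nothing to compare: you have filled in what the paper outsources to the literature.
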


\section{Local well-posedness and unconditional uniqueness}
\label{S3}
\subsection{Local well-posedness}

To prove Theorem \ref{LWP}, we use the contraction mapping principle in a complete metric space.
Fix $M>\|u_0\|_q$. For $T>0$, define
$$
\mathbf{X}(T) = \left\{ u \in  L^\infty((0,T), L^q) \cap L^\infty_{loc}((0,T), L^{pq})
;\;
\sup_{0<t<T} \|u(t)\|_{L^q}\leq M\;\mbox{and}\; \sup_{0<t<T}t^{\sigma}\|u(t)\|_{pq}  \leq M \right\},
$$
where 
\begin{equation}
    \label{sigma}
    \sigma=\frac{N(p-1)}{2pq}.
\end{equation} We endow $\mathbf{X}(T)$  with the distance 
\begin{equation*}
d(u, v) = \sup_{0<t<T}\|u(t)-v(t)\|_{q} +\sup_{0<t<T}t^{\sigma}\|u(t)-v(t)\|_{pq}.
\end{equation*}
It is clear that  $\left(\mathbf{X}(T), d\right)$ is a  nonempty complete metric space.
\begin{proof}[Proof of Theorem \ref{LWP}]  

Given $u \in\mathbf{X}(T)$, we define

\begin{equation}
    \label{Phi}
    \Phi(u)(t)=\bs(t)u_0 +\int_0^t \bs(t-\tau)\|u(\tau)\|_{q}^\alpha |u(\tau)|^p  \, d\tau+\int_0^t  \tau^{\varrho} \bs(t-\tau) \bw\,d\tau.
\end{equation}
Our goal is to show that, for a suitably chosen $T$, the map $\Phi$ is well-defined from $\mathbf{X}(T)$ into itself and constitutes a contraction.

For $u \in \mathbf{X}(T)$ and $t \in (0, T)$, it follows from \eqref{smooth-effect} that
\begin{equation}
\label{Stab1}
\begin{split}
\|\Phi(u)(t)\|_q&\leq \|u_0\|_q+M^{\alpha+p}\int_0^t \tau^{-p\sigma} d\tau+\frac{T^{\varrho+1}}{\varrho+1}\|\bw\|_q\\
&\leq\|u_0\|_q+\frac{T^{1-p\sigma}}{1-p\sigma}M^{\alpha+p}+\frac{T^{\varrho+1}}{\varrho+1}\|\bw\|_q.
\end{split}
\end{equation}
Similarly, it follows that
\begin{equation}
\label{Stab2}
\begin{split}
\|\Phi(u)(t)\|_{pq}
&\leq t^{-\sigma}\|u_0\|_q+M^{\alpha}\int_0^t (t-\tau)^{-\sigma} \|u(\tau)\|_{pq}^p\,d\tau+\left(\int_0^t\,\tau^{\varrho}(t-\tau)^{-\sigma}\,d\tau\right)\|\bw\|_q\\
&\leq t^{-\sigma}\|u_0\|_q+M^{\alpha+p}t^{1-\sigma-p\sigma}\left(\int_0^1\,s^{-p\sigma}(1-s)^{-\sigma}\,ds\right) +t^{\varrho+1-\sigma}\|\bw\|_q\left(\int_0^1\,s^{\varrho}(1-s)^{-\sigma}\,ds\right)\\
&\leq t^{-\sigma}\bigg(\|u_0\|_q+ M^{\alpha+p}T^{1-p\sigma}{\mb}(1-p\sigma, 1-\sigma)+T^{\varrho+1}\|\bw\|_q {\mb}(1+\varrho, 1-\sigma)\bigg),
\end{split}
\end{equation}
where $\mb$ denotes the well-known Beta function. Here, we have used the condition $\varrho > -1$ and the straightforward observation that $\sigma < \frac{1}{p} < 1$ where $\sigma$ is given by \eqref{sigma}.

From \eqref{Stab1} and \eqref{Stab2}, it follows easily that $\Phi(u) \in \mathbf{X}(T)$, provided $T > 0$ is chosen sufficiently small such that
\begin{equation}
\label{Stab3}
\begin{split}
    \frac{T^{1-p\sigma}}{1-p\sigma}M^{\alpha+p}+\frac{T^{\varrho+1}}{\varrho+1}\|\bw\|_q&<M-\|u_0\|_q\\
    M^{\alpha+p}T^{1-p\sigma}{\mb}(1-p\sigma, 1-\sigma)+T^{\varrho+1}\|\bw\|_q {\mb}(1+\varrho, 1-\sigma)&<M-\|u_0\|_q
\end{split}
\end{equation}

The following step is to show that $\Phi$ is a contraction. To verify this, let us consider $u, v \in \mathbf{X}(T)$, and write using \eqref{smooth-effect} together with \eqref{key-contrac} 
\begin{equation}
    \label{Contra1}
    \begin{split}
        \left\| \Phi(u)(t) - \Phi(v)(t) \right\|_{{q}} &\leqslant C \int_0^t \left\| u(\tau) \right\|_{pq}^p \left\| u(\tau) - v(\tau) \right\|_{q}  \left( \left\| u(\tau) \right\|_{q}^{\alpha - 1} + \left\| v(\tau) \right\|_{q}^{\alpha - 1} \right) d\tau \\ &\quad + C \int_0^t \left\| v(\tau) \right\|_{q}^\alpha \left\|\left| u(\tau) - v(\tau) \right| \left( \left| u(\tau) \right|^{p-1} + \left| v(\tau) \right|^{p-1} \right) \right\|_{q} d\tau\\
        &\leq C M^{\alpha+p-1} T^{1-p\sigma} d(u,v),
    \end{split}
\end{equation}
where we have used H\"older's inequality with $\frac{1}{q}=\frac{1}{pq}+\frac{p-1}{pq}$.

On the other hand, by using the smoothing effect inequality \eqref{smooth-effect}, and taking into account \eqref{sigma} and \eqref{key-contrac},  we get
\begin{equation}
    \label{Contra2}
    \begin{split}
\left\| \Phi(u)(t) - \Phi(v)(t) \right\|_{pq} &\leqslant C \int_0^t (t-\tau)^{-\sigma}\left\| u(\tau) \right\|_{pq}^p \left\| u(\tau) - v(\tau) \right\|_{q}  \left( \left\| u(\tau) \right\|_{q}^{\alpha - 1} + \left\| v(\tau) \right\|_{q}^{\alpha - 1} \right) d\tau \\ &\quad + C \int_0^t (t-\tau)^{-\sigma}\left\| v(\tau) \right\|_{q}^\alpha \left\|\left| u(\tau) - v(\tau) \right| \left( \left| u(\tau) \right|^{p-1} + \left| v(\tau) \right|^{p-1} \right) \right\|_{q} d\tau\\
        &\leq C M^{\alpha+p-1} t^{1-p\sigma-\sigma} d(u,v).
        \end{split}
\end{equation}
It follows from \eqref{Contra1}
and \eqref{Contra2} that
 \begin{equation}
     \label{Contra3}
  d\left(\Phi(u),\Phi(v)\right)\leqslant C M^{\alpha+p-1} T^{1-p\sigma} d(u,v).
 \end{equation}
It follows from \eqref{Contra3} that for $ T > 0 $ sufficiently small, the mapping $ \Phi: \mathbf{X}(T) \to \mathbf{X}(T) $ is a contraction. By the Banach fixed-point theorem, $ \Phi $ admits a unique fixed point $ u \in \mathbf{X}(T) $, which constitutes a solution to \eqref{Duhamel}. 

To complete the proof, it remains to establish that $ u $ possesses the regularity $ u \in C([0, T], L^q) $. Since $ u \in \mathbf{X}(T) $, and under the assumptions $ p\sigma < 1 $, $ \mathbf{w} \in L^q $, and $ \varrho > -1 $, it follows that the term $ \|u(t)\|_q^\alpha |u|^p + t^{\varrho} \mathbf{w}(x) $ belongs to $ L^1((0, T), L^q) $. Invoking Lemma \ref{C-L-q}, we conclude that $ u \in C([0, T], L^q) $, as desired.
\end{proof}
\subsection{Unconditional uniqueness}
Our goal is to prove the {unconditional uniqueness} stated in Theorem \ref{Uniq}. To this end, let $ T > 0 $, and let $ u, v \in C([0, T], L^q) $ be two solutions of \eqref{main}. Define
$$
\psi(t) = \|u(t) - v(t)\|_q \quad \text{and} \quad \mathcal{M} = \mathcal{M}(T) = \sup_{0 \leq t \leq T} \left( \|u(t)\|_q + \|v(t)\|_q \right).
$$
From \eqref{Duhamel}, \eqref{key-contrac}, and \eqref{smooth-effect}, we derive the inequality
$$
\psi(t) \leq C \mathcal{M}^{\alpha + p - 1} \int_0^t (t - \tau)^{-\sigma} \psi(\tau) \, d\tau,
$$
where $\sigma$ is given by \eqref{sigma}. Invoking Lemma \ref{SGI}, we conclude that $ \psi(t) \equiv 0 $ for all $ t \in [0, T] $, which implies $ u = v $ in $ C([0, T], L^q) $. This establishes the desired uniqueness.
\section{Nonexistence of global solutions}
\label{S4}
Our goal is to prove the nonexistence of global weak solutions under the assumptions stated in Theorem \ref{Fujita-2-bis}. To achieve this, we employ the test function method, following the approach used in \cite{Gued-Kira, JKS, MM, Majd,  ES} for instance.
 \begin{proof}[Proof of Theorem \ref{Fujita-2-bis}]
 Suppose that the conditions of Theorem \ref{Fujita-2-bis} hold, and assume that
\begin{equation}
    \label{Contrad-0}
    p + \frac{N\delta}{N - 2\varrho - 2} < \frac{N - 2\varrho}{N - 2\varrho - 2}.
\end{equation}
We proceed by contradiction, assuming that the problem \eqref{main} admits a global weak solution with nonnegative initial data in the sense of Definition \ref{defn:weak-solution}. To derive a contradiction, we employ the test function method. Specifically, we introduce carefully chosen cut-off functions
$\psi_k\in C^{\infty}_{0}([0,\infty))$, $k=1,2$ with $0\leq \psi_k\leq 1$ and
$$\psi_1(s)=\begin{cases}
1 \hspace{0.12cm}\text{if}\hspace{0.12cm} 1/2\leq s\leq 3/4\\
0 \hspace{0.12cm}\text{if}\hspace{0.12cm} s\in [0,1/4]\cup[4/5,\infty)
\end{cases},\quad\psi_2(s)=\begin{cases}
1 \hspace{0.12cm}\text{if}\hspace{0.12cm}  s\in [0,1]\\
0 \hspace{0.12cm}\text{if}\hspace{0.12cm} s\geq 2.
\end{cases}
$$  	
Next, for $T>1$ sufficiently large, we define the function
$$\psi_{T}(x,t)=\psi_1\bigg(\frac{t-1}{T}\bigg)^{\frac{p}{p-1}}\psi_2\bigg(\frac{|x|^{2}}{T}\bigg)^{\frac{2p}{p-1}}.
$$
By multiplying both sides of the equation \eqref{main} by $\psi_T$ and integrating over the domain  $(1, T) \times \mathbb{R}^N$, we obtain
\begin{equation}
\label{Contad-1}
\begin{split}
     -\int_1^T \int_{\mathbb{R}^N} u \, \partial_t \psi_T \, dx \, dt 
-\int_1^T \int_{\mathbb{R}^N} u \, \Delta \psi_T \, dx \, dt&= \int_1^T \int_{\mathbb{R}^N} \|u\|_{q}^\alpha |u|^p \, \psi_T \, dx \, dt 
\\&+\int_1^T \int_{\mathbb{R}^N} t^{\varrho} \bw(x) \, \psi_T \, dx \, dt 
\\&+ \int_{\mathbb{R}^N} u_1(x) \, \psi_T(x,1) \, dx.
\end{split}
\end{equation}
Since $\psi_1(x,1) = 0$, it follows that 
$$
\int_{\mathbb{R}^N} u_{1}(x) \psi_T(x,1) dx = 0.
$$
Moreover, from \eqref{Contad-1}, we deduce that

\begin{equation}
    \label{Contrad-2}
\mathbb{K}(T)+\int_1^T\int_{\mathbb{R}^N}  t^{\varrho}\bw(x) \psi_T d x d t\leq \mathbb{I}(T)+\mathbb{J}(T)
\end{equation}
where
\begin{eqnarray}
\label{I}
\mathbb{I}(T)&=& \int_1^T\int_{\mathbb{R}^N}|u| |\Delta \psi_T| d x d t, \\
\label{J}
\mathbb{J}(T)&=&\int_1^T\int_{\mathbb{R}^N}|u|\left|\partial_t \psi_T\right| d x d t, \\
\label{K}
\mathbb{K}(T)&=&C\int_1^T\int_{\mathbb{R}^N} t^{-\frac{N \alpha}{2}(1-\frac{1}{q})} |u|^p \psi_T d x d t.
\end{eqnarray}
Here, we have used \eqref{comp-est-q} for the last term $\mathbb{K}(T)$.

Applying the $\varepsilon-$Young inequality \eqref{Young-Ineq}, we obtain
\begin{equation}
    \label{Contrad-3}
    \begin{split}
 \mathbb{I}(T)&=\int_1^{T} \int_{\mathbb{R}^N}\left(t^{-\frac{N}{2 p} \alpha\left(1-\frac{1}{q}\right)}|u|  
\psi_T^{\frac{1}{p}}\right) \left(t^{\frac{N}{2p} \alpha\left(1-\frac{1}{q}\right)} \psi_T^{-\frac{1}{p}} \left|\Delta \psi_T\right|\right)d x d t \\
& \leq \frac{1}{4}\mathbb{K}(T)+ C \underset{\mathbb{I}_1(T)}{\underbrace{\int_1^T \int_{\mathbb{R}^N} {t^{\frac{N}{2} \frac{\alpha}{p-1}\left(1-\frac{1}{q}\right)}} \psi_T^{-\frac{1}{p-1}}\left|\Delta \psi_T\right|^{\frac{p}{p-1}} d x d t}}.
    \end{split}
\end{equation}

To bound the term $\mathbb{I}_1(T)$, we apply Lemma \ref{lapl-g} after performing the change of variable $\tau = \frac{t-1}{T}$. This leads to
\begin{equation}
    \label{Contrad-4}
  \begin{split}
      \mathbb{I}_1(T)&\leq C T^{-\frac{p}{p-1}}\left(\int_1^T t^{\frac{N}{2} \frac{\alpha}{p-1}\left(1-\frac{1}{q}\right)} \psi_1\left(\frac{t-1}{T}\right)^{\frac{p}{p-1}} dt \right) {\left( \int_{|x|<\sqrt{2T}} dx\right)}\\
      &\leq C T^{1+\frac{N}{2}-\frac{p}{p-1}} \left(\int_{0}^{1} (1 + T\tau)^{\frac{N}{2} \frac{\alpha}{p-1}\left(1-\frac{1}{q}\right)} \psi_1(\tau)^{\frac{p}{p-1}} d\tau \right)\\
      &\leq C T^{1+\frac{N}{2}-\frac{p}{p-1}+\frac{N\delta}{2(p-1)}},
  \end{split}  
\end{equation}
where $\delta$ is given by \eqref{delt}. Therefore, from \eqref{Contrad-3}, we obtain
\begin{equation}
    \label{Contrad-5}
    \mathbb{I}(T) \leq \frac{1}{4}\mathbb{K}(T) + C T^{1+\frac{N}{2}-\frac{p}{p-1}+\frac{N\delta}{2(p-1)}}.
\end{equation}

Similarly, we find that
\begin{equation}
    \label{Contrad-6}
    \mathbb{J}(T) \leq \frac{1}{4}\mathbb{K}(T) + C T^{1+\frac{N}{2}-\frac{p}{p-1}+\frac{N\delta}{2(p-1)}}.
\end{equation}

Returning to \eqref{Contrad-2} and combining the estimates \eqref{Contrad-5} and \eqref{Contrad-6}, we obtain
\begin{equation}
    \label{Contrad-7}
    \int_1^T \int_{\mathbb{R}^N} t^\varrho \bw(x) \psi_T(x) \, dx \, dt \leq C T^{1+\frac{N}{2}-\frac{p}{p-1}+\frac{N\delta}{2(p-1)}}.
\end{equation}

To complete the proof, we establish a suitable lower bound for the left-hand side of \eqref{Contrad-7}. Writing
\begin{equation}
    \label{Contrad-8}
    \begin{split}
        \int_1^T \int_{\mathbb{R}^N} t^\varrho \bw(x) \psi_T(x) \, dx \, dt &= \left(\int_1^T t^{\varrho} \psi_1\left(\frac{t-1}{T}\right)^{\frac{p}{p-1}} \, dt\right) \left(\int_{\mathbb{R}^N} \psi_2\left(\frac{|x|^2}{T}\right)^{\frac{2p}{p-1}} \bw(x) \, dx\right) \\
        &\geq C T^{\varrho+1} \int_{\mathbb{R}^N} \bw(x) \psi_2\left(\frac{|x|^2}{T}\right)^{\frac{2p}{p-1}} \, dx,
    \end{split}
\end{equation}
we observe that since $\bw \in L^1(\mathbb{R}^N)$ and $\psi_2(0) = 1$, the dominated convergence theorem implies
$$
\lim_{T \to \infty} \int_{\mathbb{R}^N} \bw(x) \left(\psi_2\left(\frac{|x|^2}{T}\right)\right)^{\frac{2p}{p-1}} \, dx = \int_{\mathbb{R}^N} \bw(x) \, dx > 0.
$$
Thus, for sufficiently large $T > 1$, we have
\begin{equation}
    \label{Contrad-9}
    \int_{\mathbb{R}^N} \bw(x) \, dx \leq C T^{\frac{N}{2}-\frac{p}{p-1}+\frac{N\delta}{2(p-1)}-\varrho}.
\end{equation}

Noting that
\begin{equation}
    \label{Contrad-10}
    \frac{N}{2} - \frac{p}{p-1} + \frac{N\delta}{2(p-1)} - \varrho = \frac{N - 2\varrho - 2}{2} + \frac{N\delta - 2}{2(p-1)},
\end{equation}
and recalling that $N\delta - 2 < 0$ and \eqref{Contrad-0}, we easily see that
\begin{equation}
    \label{Contrad-11}
    \frac{N - 2\varrho - 2}{2} + \frac{N\delta - 2}{2(p-1)} < 0.
\end{equation}

Letting $T \to \infty$ in \eqref{Contrad-9}, we conclude that
$$
\int_{\mathbb{R}^N} \bw(x) \, dx \leq 0,
$$
which is a contradiction. This completes the proof of Theorem \ref{Fujita-2-bis}.\end{proof}

\section{Global existence}
\label{S5}
The following lemma plays a crucial role in the proof of Theorem \ref{GEP}.
\begin{lem} \label{tool}
Let $N\geq2$, $p,q>1$, $\alpha \geq 1$, and $-1<\varrho < 0$. Suppose that $ 0\leqslant \delta < \frac{2}{N} $, if
\begin{equation}
p\geq\frac{N - 2\varrho-N\delta}{N - 2\varrho - 2}
\label{E1}  
\end{equation}
Then, the following inequalities hold: 
\begin{equation}
\frac{2(q-1)+N\delta p}{Np\left((p-1)(q-1)+q\delta\right)}<\frac{1}{p_c},
\label{E2}    
\end{equation}
\begin{equation}
\frac{2(q-1)+N\delta p}{Np\left((p-1)(q-1)+q\delta\right)}< \frac{(p-1)(q+\delta-1)}{p\left[(q-1)(p-1)+q\delta \right]},
\label{E3}    
\end{equation}
\begin{equation}  
\frac{1}{p_c}+\frac{2\varrho}{N}+\frac{(p-1)[2Nqp-Nq\delta(2-\delta)]-N\delta-2pq\delta}{Np(p-1)[(p-1)(q-1)+ q\delta]}<\frac{N(p-1)(q+\delta-1)}{Np\left[(q-1)(p-1)+q\delta \right]},
\tag{E4}
\label{E4}  
\end{equation}
where $\delta$ is defined in \eqref{delt} and $p_c$ is as given in Theorem \ref{GEP}.
\end{lem}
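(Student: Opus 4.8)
The plan is to treat \eqref{E2}, \eqref{E3} and \eqref{E4} as elementary rational inequalities in the parameters, clear denominators, and reduce each to a polynomial inequality that can be verified using \eqref{E1} together with the sign conditions $p,q>1$, $0\le\delta<\tfrac{2}{N}$, $\alpha\ge1$ and $-1<\varrho<0$. The first thing I would record is that every denominator that appears is strictly positive: writing $A:=(p-1)(q-1)+q\delta$, one has $A>0$, and $N-2\varrho-2>0$ (since $N\ge2$ and $\varrho<0$), so that \eqref{E1} is meaningful and, rewritten, yields the key lower bound $p-1\ge\frac{2-N\delta}{N-2\varrho-2}$, with $2-N\delta>0$ because $\delta<\tfrac{2}{N}$. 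I would also record the consequence of $\alpha\ge1$, namely $\delta=\alpha\bigl(1-\tfrac1q\bigr)\ge 1-\tfrac1q$, i.e. $q\delta\ge q-1$, which will control the lower-order terms.

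For \eqref{E2} I would use that $\frac{1}{p_c}=\frac{2(q-1)+N\delta}{NA}$ while the left-hand side equals $\frac{2(q-1)+N\delta p}{NpA}$; the two share the positive factor $NA$, so after multiplying through by $NpA$ the inequality becomes $2(q-1)+N\delta p< p\bigl(2(q-1)+N\delta\bigr)$, which cancels to $2(q-1)(p-1)>0$. This is immediate from $p,q>1$ and disposes of \eqref{E2} completely.

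For \eqref{E3} I would clear the common factor $pA$ (observing that the left denominator is exactly $N$ times the right one), reducing the inequality to $\frac{2(q-1)+N\delta p}{N}<(p-1)(q+\delta-1)$; expanding and cancelling the common term $N\delta p$ yields the equivalent polynomial form $N(p-1)(q-1)>2(q-1)+N\delta$. The remaining task is purely to verify this, and the plan is to insert the lower bound $p-1\ge\frac{2-N\delta}{N-2\varrho-2}$ from \eqref{E1}, then to use $q\delta\ge q-1$ and $N-2\varrho-2<N$ (from $\varrho<0$) to absorb the right-hand side; care is needed here, as the margin is thin and the estimate is delicate near the boundary regimes $\delta\to\tfrac{2}{N}$ and $\varrho\to-1$.

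For \eqref{E4}, which I expect to be the main obstacle, I would first simplify its right-hand side: cancelling the factor $N$ shows it equals $\frac{(p-1)(q+\delta-1)}{pA}$, precisely the right-hand side of \eqref{E3}. I would then bring the three left-hand terms $\frac{1}{p_c}$, $\frac{2\varrho}{N}$ and the bulky fraction with denominator $Np(p-1)A$ over the common denominator $Np(p-1)A$, substituting the explicit value of $p_c$, and clear it against the right-hand side. This produces a single polynomial inequality in $N,p,q,\delta,\varrho$, quadratic in $\delta$ because of the $\delta(2-\delta)$ term and linear in $\varrho$. The plan is then to substitute \eqref{E1} in the form $p-1\ge\frac{2-N\delta}{N-2\varrho-2}$, collect terms by powers of $\varrho$, and conclude using $-1<\varrho<0$ and $0\le\delta<\tfrac{2}{N}$. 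The genuine difficulty is bookkeeping: the number of monomials is large, the $\varrho$-dependence enters with two signs (directly through $\frac{2\varrho}{N}$ and indirectly through the \eqref{E1} bound) and must be tracked carefully, and one must keep the reduction an honest equivalence by multiplying only by the quantities shown to be positive. I would organize the computation by isolating the coefficient of $(p-1)$, so that \eqref{E1} can be applied to the dominant term while the remaining lower-order contributions are treated as a controlled remainder using $q\delta\ge q-1$.
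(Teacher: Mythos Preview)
The paper states this lemma without proof and proceeds directly to use it in the construction following it, so there is no argument in the paper to compare yours against.

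On the substance of your plan: your treatment of \eqref{E2} is complete and correct, and your algebraic reduction of \eqref{E3} to the equivalent form $N(p-1)(q-1)>2(q-1)+N\delta$ is also correct. The genuine gap is that this last inequality does \emph{not} follow from the stated hypotheses, so the ``delicate'' step you flag cannot be closed. Concretely, take $N=3$, $\varrho=-\tfrac{9}{10}$, $\alpha=1$, $q=\tfrac{10}{9}$ (hence $\delta=\tfrac{1}{10}<\tfrac{2}{3}$), and $p=\tfrac{45}{28}$, which satisfies \eqref{E1} with equality; then $N(p-1)(q-1)=\tfrac{17}{84}\approx 0.20$ while $2(q-1)+N\delta=\tfrac{47}{90}\approx 0.52$, so \eqref{E3} fails, and it continues to fail for $p$ well above the threshold, so this is not a boundary artefact. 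These parameters even obey the additional constraint $\tfrac{N(p-1)}{2}\le q\le p$ imposed in Theorem~\ref{GEP}. In short, the lemma as printed appears to be mis-stated or to rely on hypotheses not recorded there; your strategy for \eqref{E3} and \eqref{E4} cannot succeed because the target inequalities are not true in the generality claimed, and the auxiliary bound $q\delta\ge q-1$ points in the wrong direction (it bounds $q-1$ from above, whereas you need it from below).
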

If we further consider the case \( \frac{N(p-1)}{2} \leq q \leq p \) and apply Lemma \ref{tool}, we can choose \( r \geq 1 \) satisfying:
\begin{equation}  
\label{choice-r}
\begin{split}
   \max\left(\frac{2(q-1)+N\delta p}{Np\left((p-1)(q-1)+q\delta\right)}, {\frac{1}{p_c} + \frac{2\varrho}{N} + \frac{2\delta}{Np\left[(q-1)(p-1)+q\delta \right]}}\right) &< \frac{1}{r} \\
   \min \left(\frac{1}{p_c}, \frac{(p-1)(q+\delta-1)}{p\left[(q-1)(p-1)+q\delta \right]}\right)&>\frac{1}{r}
\end{split}
\end{equation}
With this choice, we observe that \( 1 \leq l < p_c < r \leq \infty \). Assuming \( r \) is fixed, we set
\begin{equation}
\label{choice- beta }
\beta =\frac{N}{2}\left(\frac{1}{p_c}-\frac{1}{r}\right)= \frac{2r(q - 1) + N\delta r - N(p - 1)(q - 1) - N\delta q}{2r \left((p - 1)(q - 1) + \delta q\right)}.
\end{equation}
It is straightforward to verify that 
\[
0 < \beta < \frac{1}{p+\alpha} = \frac{q-1}{p(q-1) + q\delta}.
\]
Let \(\nu\) be a sufficiently small positive constant. We define the function space \(\mathbf{Y}\) as follows:
$$
\mathbf{Y} = \left\{ u \in  L^\infty((0,\infty),L^r(\mathbb{R}^N)) \; ; \: 
 \sup_{t>0} t^{\beta}\|u(t)\|_{r} \leqslant \nu \right\}.$$
Equipped with the distance 
\[
d(u, v) = \sup_{t > 0} \, t^{\beta} \|u - v\|_{r},
\]
it is clear that \((\mathbf{Y}, d)\) forms a complete metric space.

Now, assume \( p \geq \frac{N - 2\varrho - N\delta}{N - 2\varrho - 2} \). Let \( u_0 \in L^{q_c} \) and \( \mathbf{w} \in L^{\ell} \) satisfy 
\[
\|u_0\|_{L^{p_c}} + \|\mathbf{w}\|_{L^\ell} < \epsilon
\]
for some \( \epsilon > 0 \). We will show that the map \(\Phi\) defined in \eqref{Phi} admits a unique fixed point in \(\mathbf{Y}\).

Since \( u_0 \in L^{p_c} \), it follows from \eqref{choice-r} and \eqref{smooth-effect} that
\begin{equation}
\| e^{t \Delta} u_0 \|_r \leq t^{-\frac{N}{2} \left( \frac{1}{p_c} - \frac{1}{r} \right)} \| u_0 \|_{p_c} = t^{-\beta} \| u_0 \|_{p_c}, \quad t > 0  
 \end{equation}
Moreover, as a consequence of \eqref{choice-r}, we have \( r > p \). Under the given hypothesis on \( q \), applying \eqref{smooth-effect} yields
\begin{align}
\int_0^t \| e^{(t-\tau) \Delta} |u(\tau)|^p \|u(\tau)\|_{q}^\alpha \|_{r}  \, d \tau &\leqslant  \int_0^t (t-\tau)^{-\frac{N}{2r} (p-1)} \| u(\tau) \|_{r}^p \| u(\tau) \|_{q}^\alpha \, d \tau \\
&\leqslant  \int_0^t (t-\tau)^{-\frac{N}{2r} (p-1)} \tau^{-\beta\alpha}(\tau\|u\|_{q})^{\beta \alpha} \tau^{-\beta p}(\tau \|u\|_{r})^{\beta p}d \tau \\
&\leqslant  \left(\sup_{t>0} t^{\beta} \|u\|_{r}\right) ^{p} \left(\sup_{t>0} t^{\beta} \|u\|_{q}\right)^{\alpha} \int_0^t (t-\tau)^{-\frac{N}{2r} (p-1)}\tau^{-\beta(p+\alpha)} d \tau \\
&\leqslant  \nu^{p+\alpha} t^{-\frac{N}{2r} (p-1)-\beta p- \beta \alpha+1} \int_0^1 (1-s)^{-\frac{N}{2r} (p-1)} s^{-\beta(p+\alpha)} ds \\
&= C \nu^{p+\alpha} t^{-\frac{N}{2q} (p-1) - \beta p - \beta \alpha +1} \mb\left( 1 - \beta (p+\alpha), 1 - \frac{N}{2r} (p-1) \right) \\
&= C \nu^{p+\alpha} t^{-\beta}, \quad t > 0,
\end{align}
We observe that, by \eqref{choice-r} and the condition \(\beta (p+\alpha) < 1\), the quantity 
$\mb\left( 1 - \beta (p+\alpha), 1 - \frac{N}{2r} (p-1) \right)$
is well-defined. Similarly, we can conclude that
\begin{align}
\int_0^t \| \tau^\varrho e^{(t-\tau) \Delta} \bw \|_{r} \, d\tau &\leq  \left(\int_0^t \tau^\varrho (t-\tau)^{-\frac{N}{2} \left( \frac{1}{l} - \frac{1}{r} \right)} d\tau\right) \|\bw \|_{\ell} \\
&=  t^{\varrho-\frac{N}{2} \left( \frac{1}{l} - \frac{1}{r} \right) + 1} \left(\int_0^1 s^\varrho (1-s)^{-\frac{N}{2}\left(\frac{1}{l}-\frac{1}{r}\right)} ds \right)\|\bw \|_{\ell}\\
&=  t^{\varrho-\frac{N}{2} \left( \frac{1}{l} - \frac{1}{r} \right) + 1}  \mb\left( 1+\varrho, 1 - \frac{N}{2} (\frac{1}{l}-\frac{1}{r}) \right)\|\bw \|_{\ell} \\
&= C t^{-\beta} \|\bw \|_{\ell}, \quad t > 0.
\end{align}
We emphasize that, under the assumption \(\varrho > -1\) and condition \eqref{choice-r}, the quantity 
$\mb \left( 1+\varrho, 1 - \frac{N}{2} \left( \frac{1}{l} - \frac{1}{r} \right) \right)
$ is well-defined. Consequently, the above estimates enable us to conclude that
\begin{equation}
\label{Stability1}
t^\beta \|\Phi(u)(t) \|_{r} \leq C \left( \| u_0 \|_{p_c} + \nu^{p+\alpha} + \|\bw\|_{\ell} \right), \quad t > 0
\end{equation}
By selecting \(\epsilon > 0\) and \(\nu > 0\) sufficiently small, we deduce that 
\begin{equation}
\label{Stability2}
\sup_{t>0} t^\beta \|\Phi(u)(t) \|_{r} \leq \nu.
\end{equation}
This proves that \(\Phi\) maps \(\mathbf{Y}\) into itself. To complete the proof, we now show that \(\Phi: \mathbf{Y} \rightarrow \mathbf{Y}\) is a contraction.

Using arguments similar to those above, we can show that for any \(u, v \in \mathbf{Y}\), the following holds:
\begin{align*}
\|\Phi(u)(t) - \Phi(v)(t)\|_{r} &\leq \int_0^t \left\| e^{(t-\tau)\Delta} \left( |u(\tau)|^p \|u(\tau)\|_{q}^\alpha - |v(\tau)|^p \|v(\tau)\|_{q}^\alpha \right) \right\|_{{r}} \, d\tau \\
&\leq \int_0^t (t-\tau)^{-\frac{N}{2}\left(\frac{p}{r} - \frac{1}{r}\right)} \left\| |u(\tau)|^p \|u-v\|_{q} \left( \|u(\tau)\|_{q}^{\alpha-1} - \|v(\tau)\|_{q}^{\alpha-1} \right) \right\|_{{\frac{r}{p}}} \, d\tau \\
&\quad +  \int_0^t (t-\tau)^{-\frac{N}{2}\left(\frac{p}{r} - \frac{1}{r}\right)} \|v(\tau)\|_{q}^\alpha \left\| |u-v|\left(|u(\tau)|^{p-1} - |v(\tau)|^{p-1}\right) \right\|_{{\frac{r}{p}}} \, d\tau \\
&\leq  \int_0^t (t-\tau)^{-\frac{N}{2}\left(\frac{p}{r} - \frac{1}{r}\right)} \tau^{-\beta p} (\tau^{\beta} \|u\|_{r})^{p} \tau^{-\beta} (\tau^{\beta} \|u-v\|_{r}) \tau^{-\beta(\alpha-1)} (\tau^{\beta} \|u\|_{r})^{\alpha-1} \, d\tau \\
&\quad +  \int_0^t (t-\tau)^{-\frac{N}{2}\left(\frac{p}{r} - \frac{1}{r}\right)} \tau^{-\beta \alpha} (\tau^{\beta} \|v(\tau)\|_{q})^\alpha \tau^{-\beta} (\tau^{\beta} \|u-v\|_{q}) \tau^{-\beta(p-1)} \\
&\qquad \times \left( (\tau^{\beta} \|u\|_{r})^{p-1} + (\tau^{\beta} \|v\|_{r})^{p-1} \right) \, d\tau \\
&\leq  t^{-\beta} \nu^{p+\alpha-1} \mb\left( 1 - \beta (p+\alpha), 1 - \frac{N}{2r} (p-1) \right) d(u,v).
\end{align*}
Therefore,
\[
d(\Phi(u), \Phi(v)) \leq C \nu^{p+\alpha-1} d(u, v).
\]
By choosing \(\nu > 0\) sufficiently small, we conclude that \(\Phi: \mathbf{Y} \rightarrow \mathbf{Y}\) is a contraction. Consequently, the global existence of a solution follows from the Banach fixed point theorem applied to the complete metric space \(\mathbf{Y}\).
\vspace{1cm}

\hrule 

\vspace{0.3cm}
\noindent{\bf\large Declarations.} {\em On behalf of all authors, the corresponding author states that there is no conflict of interest. No data-sets were generated or analyzed during the current study.}

	\vspace{0.7cm}

 \hrule

\end{document}